\documentclass[a4paper,12pt,oneside]{article}
\usepackage[english]{babel}
\usepackage[T1]{fontenc} 
\usepackage[utf8]{inputenc}
\usepackage{amsthm}
\usepackage{bbm}
\usepackage{amsmath}
\usepackage{amssymb}  
\usepackage{indentfirst}
\usepackage{fancyhdr}
\usepackage{amsthm}
\usepackage{graphicx}
\usepackage{pdfpages}
\usepackage{esint}
\usepackage{url}
\usepackage{xcolor}
\usepackage{ulem}

\numberwithin{equation}{section}

\theoremstyle{plain} 
\newtheorem{thm}{Theorem}[section] 
 
\newtheorem{lem}[thm]{Lemma} 
\newtheorem{prop}[thm]{Proposition} 
 
\newtheorem{defn}[thm]{Definition}

\usepackage{geometry}
\allowdisplaybreaks[4]

\begin{document}
\author {Miriam Piccirillo 
\sc{}\thanks{Dipartimento di Matematica e Applicazioni "R. Caccioppoli", Università degli Studi di Napoli "Federico II", Via Cintia, 80126 Napoli, Italy. E-mail: \textit{miriam.piccirillo@unina.it}} 
  }

\title{Second-order regularity for weighted widely degenerate problems with explicit $u$-dependence}
\date{}
\maketitle

\begin{abstract}
We consider local weak solutions of widely degenerate elliptic PDEs of the type 
\begin{equation}
        \label{equazione mia}
        \mathrm{div}\Biggl(|x|^\beta(|Du|-1)^{p-1}_+\frac{Du}{|Du|}\Biggr)=\frac{|u|^{q-2}u}{|x|^\alpha} \ \ \text{ in }\Omega,
    \end{equation}
where $2\leq p<n$, $\alpha,\beta>0$ are fixed exponents, $\Omega$ is an open subset of $\mathbb{R}^n,$ that contains the origin, $n>2,$ and $( \ \cdot \ )_+$  stands for the positive part. We establish a higher differentiability result for the composition of the gradient with a suitable function that vanishes in the unit ball for the gradient, under appropriate assumptions on the datum. The novelty here with respect to previous papers on the subject is that the right-hand side explicitly depends on the solution $u$ and we have a mismatch between the weight on the left-hand side and the right-hand side.
\end{abstract}

\medskip
\noindent \textbf{Keywords:} - Widely degenerate problems, explicit $u$-dependence, higher differentiability
\medskip \\
\medskip
\noindent \textbf{MSC 2020:} - 35J70, 35J75.

\section{Introduction}
In this paper, we are interested in the regularity properties of the local weak solutions to strongly degenerate elliptic equation with a double spatial dependency and a singular lower-order term. Specifically, we consider equations of the form
\begin{equation}
        \label{equazione mia}
        \mathrm{div}\Biggl(|x|^\beta(|Du|-1)^{p-1}_+\frac{Du}{|Du|}\Biggr)=\frac{|u|^{q-2}u}{|x|^\alpha} \ \ \text{ in }\Omega,
    \end{equation}
where $2\leq p<n$, $\Omega$ is an open subset of $\mathbb{R}^n,$ containing the origin, $n>2,$ and $(\ \cdot \ )_+$  stands for the positive part. The operator on the left-hand side of \eqref{equazione mia} presents two different difficulties, in fact the term $|x|^\beta$ degenerates at the origin and the function of the gradient of $u$ is uniformly elliptic only outside the unit ball of the gradient, $\{|Du| > 1\}$, where it asymptotically behaves like the classical $p$-Laplace operator. Conversely, on the set $\{|Du| \le 1\}$, the elliptic structure completely collapses, as the energy density is flat. One of the main motivations for the study of equations of this type comes from the optimal transport problems with congestion effects, a connection that has been exhaustively studied in \cite{Brasco, BraCa2, BraCa, BraCaSan, BS}.\\
The study of such an equation fits into the wider class of asymptotically regular problems that have been studied starting from the pioneering paper by Chipot and Evans \cite{CE}, concerning the homogeneous, autonomous quadratic growth case. Later, still for the homogeneous autonomous case, the Lipschitz continuity of weak solutions has been established for the superquadratic growth \cite{GiaMo}, and in the subquadratic growth case \cite{LPdNV}. Since then, many contributions to the regularity theory of weak solutions of widely degenerate equations have been established. Among others, we mention the results in \cite{BoDuGiPa, C, CoFi2, EMM, FPdNV2, Gr, MP, Ru}. It is worth pointing out that no more than Lipschitz regularity can be expected for the solutions of widely degenerate problems. The substantial novelty of the present paper, which sets it apart from the aforementioned literature, lies in the simultaneous presence of two severe difficulties: the explicit dependence of the source term on the solution $u$ itself, and a structural weight mismatch between the principal part and the right-hand side.  
The dependence of the datum on $u$ is notorious for introducing major compactness issues. Even in the simplest setting of the Laplacian, this dependence constitutes a difficulty in itself, a fact that has been well known since the pioneering paper by Brezis and Nirenberg \cite{BreNir}. For what concerns the regularity we have to mention the papers  \cite{CM, CMM1, CMM2, GRu, GuJ}, where they obtained global boundedness of the weak solutions to a class of nonuniformly elliptic equations with $u$-dependent right-hand side as well as their higher differentiability, under nonstandard growth conditions but in the uniformly elliptic context. In fact, compared to \cite{CM, CMM1, CMM2, GRu, GuJ}, our problem features a spatial singularity at the origin governed by $|x|^{-\alpha}$, which acts against the degeneracy weight $|x|^\beta$. To balance this delicate weight mismatch and control the singular lower-order term, a meticulous scaling analysis is required. The main heuristic of this work is that the degeneracy of the weight forces the second-order regularity of the gradient to be sought in the sub-quadratic regime, namely for some $s < 2$. By imposing a rigorous balance between the singular exponent $\alpha$ and the growth $q$, we show 
that $H_{p/2}(Du)$ gains a weak derivative in the Sobolev space $W^{1,s}_{loc}(\Omega)$.
To state and describe our results precisely let us specify the assumptions on our data.\\
The exponents $\alpha $ and $\beta$ are assumed to satisfy 
\begin{align}
\label{alebe}\nonumber
&{\mathrm{(}\mathrm{i}\mathrm{)}} \textbf{ }
0<\beta<1 ,\\
&{\mathrm{(}\mathrm{ii}\mathrm{)}}\textbf{ }0<\alpha<n-2+\frac{2}{p}-\frac{qn}{p^*}.
\end{align}
where $q$ is an exponent such that
\begin{equation}
\label{q}
1\leq q\leq p^*-2\bigg(\frac{p+2}{n-p}\bigg) 
\end{equation}
and where, as usual, $p^*$ denotes the Sobolev conjugate exponent of $p$, i.e. $p^*=\frac{np}{n-p}$
 if $p<n.$
 \\
We remark that the case $q=1$ is trivial. Indeed, when $q=1$, the term $\vert{}u\vert{}^{q-2}u$ simplifies to $\mathrm{sgn}(u)$, which is uniformly bounded. Thus, the a priori estimates do not require any non-linear control on $u$ and follow directly from classical Sobolev embeddings.\\
Our main result is the following 
\begin{thm}
\label{teo3}
   Let $u\in W^{1,p}_{loc}(\Omega, |x|^\beta dx)$ be a weak solution of equation \eqref{equazione mia}, under assumptions \eqref{alebe} and \eqref{q}. Then there exists $1<s<2$ and a radius $R_0=R_0(n,p,q,\alpha,\beta,s)$such that $H_{p/2}(Du)\in W^{1,s}_{loc}(\Omega)$ and the following estimate holds
\begin{align*}
&\int_{B_{R/2}}|D(H_{p/2}(Du))|^s\, dx\leq C\left(\int_{{B_{R}}}|Du|^p|x|^{\beta}\, dx\right)^{\frac{s}{p}}\\
&+C\bigg(1+\int_{B_{R}}(|u|^{\frac{ps}{2}}\,+|Du|^{\frac{ps}{2}})\, dx\bigg)^{\kappa}
\end{align*} and hence
\begin{align*}
\int_{B_{R/2}}|D(H_{p/2}(Du))|^s\, dx&\leq C\bigg(1+\int_{B_{R}}(|Du|^p+|u|^p)|x|^\beta\, dx\bigg)^{\kappa}
\end{align*}
for every $B_R\subset B_{R_0}\Subset\Omega$, $\kappa=\kappa(q,p)>0$ and a constant $C=C(n, p, q, s, \alpha, \beta, R).$
\end{thm}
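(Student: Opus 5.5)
We prove the result by difference quotients applied to the weak formulation, followed by an approximation argument. Here $H_{p/2}(\xi)=(|\xi|-1)_+^{p/2}\frac{\xi}{|\xi|}$ is the standard function of the widely degenerate literature. We use the structural inequality
\[
\langle A(\xi)-A(\eta),\xi-\eta\rangle\ge c\,|H_{p/2}(\xi)-H_{p/2}(\eta)|^2 ,\qquad A(\xi)=(|\xi|-1)_+^{p-1}\frac{\xi}{|\xi|},
\]
together with $|A(\xi)-A(\eta)|\le C\big(|H_{p/2}(\xi)|+|H_{p/2}(\eta)|\big)^{\frac{p-2}{p}}|H_{p/2}(\xi)-H_{p/2}(\eta)|$. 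First we regularize: we replace $|x|^\beta$ by $(|x|^2+\varepsilon)^{\beta/2}$ and $|x|^{-\alpha}$ by $(|x|^2+\varepsilon)^{-\alpha/2}$, add a small $\varepsilon$-$p$-Laplacian, and take smooth approximating solutions $u_\varepsilon$ with the same boundary values on $B_R$. For these the estimate can be derived rigorously. We then pass to the limit, using lower semicontinuity of the $W^{1,s}$ norm and the uniform bounds below.

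\textbf{Step 1 (weighted Caccioppoli inequality).} We test with $\varphi=\tau_{-h}(\eta^2\tau_h u)$, where $\eta$ is a cutoff between $B_{R/2}$ and $B_R$. On the left we write
\[
\tau_h\big(|x|^\beta A(Du)\big)=|x+h|^\beta\,\tau_hA(Du)+\tau_h(|x|^\beta)\,A(Du).
\]
The first term gives $\int\eta^2|x+h|^\beta|\tau_hH_{p/2}(Du)|^2$. The second is controlled by $|\tau_h|x|^\beta|\le C|h|^{\beta}$, since $\beta<1$ makes $|x|^\beta$ only $\beta$-Hölder; this is where the loss appears. The cutoff terms are handled by Young's inequality and the growth of $A$, and produce $\int|Du|^p|x|^\beta$. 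For the right-hand side we move the difference onto the datum,
\[
\int \tau_h\big(|u|^{q-2}u|x|^{-\alpha}\big)\,\eta^2\tau_hu ,
\]
or alternatively keep $\varphi$ and estimate $\|\tau_{-h}(\eta^2\tau_hu)\|$ by $|h|\,\|D(\eta^2\tau_h u)\|$. We then use Hölder with three exponents: $|x|^{-\alpha}\in L^{t}$ locally for $t<n/\alpha$, $|u|^{q-1}\in L^{p^*/(q-1)}$ by Sobolev, and $|\tau_hu|\le|h|\,|Du|$-type bounds. Condition \eqref{alebe}(ii), $\alpha<n-2+\frac2p-\frac{qn}{p^*}$, together with \eqref{q}, should be exactly what makes these exponents sum to at most one. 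It should also leave room to absorb a power of $|\tau_h H_{p/2}|$ into the left through $|\tau_h Du|\lesssim|\tau_hH_{p/2}(Du)|^{2/p}$ plus a term on $\{|Du|\le 2\}$. The upper bound on $q$ controls the resulting powers, and these produce the exponent $\kappa(q,p)$.

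\textbf{Step 2 (removing the weight).} Step 1 gives
\[
\int_{B_{R/2}}|x+h|^\beta|\tau_hH_{p/2}(Du)|^2\,dx\le C|h|^{2}\,(\cdots),
\]
possibly with $|h|^{2}$ replaced by $|h|^{1+\beta}$ for the weight-difference term; the latter would only give fractional differentiability. To reach an honest $W^{1,s}$ result we must avoid the $|h|^\beta$ loss. To do so, we write $\tau_h(|x|^\beta)\approx h\cdot\nabla|x|^\beta$ with $|\nabla |x|^\beta|\le\beta|x|^{\beta-1}$, which is in $L^r$ for $r<n/(1-\beta)$, and estimate that term with the same Hölder scheme, which works since $n>2$. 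Once the weighted bound $\int|x|^\beta|D H_{p/2}(Du)|^2\le C(\cdots)$ holds, Hölder's inequality gives
\[
\int|DH_{p/2}|^s\le\Big(\int|x|^\beta|DH_{p/2}|^2\Big)^{s/2}\Big(\int|x|^{-\frac{\beta s}{2-s}}\Big)^{\frac{2-s}{2}} .
\]
The last factor is finite when $\beta s/(2-s)<n$, which is why $s<2$. The radius $R_0$ is chosen so that the absorption constants coming from the $u$-terms are small, for instance through the smallness of $\int_{B_{R_0}}|x|^{-\alpha t}$.

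\textbf{Conclusion and main obstacle.} The second estimate of the theorem follows from the first: by Hölder, $\int_{B_R}|Du|^{ps/2}\le\big(\int|Du|^p|x|^\beta\big)^{s/2}\big(\int|x|^{-\beta s/(2-s)}\big)^{(2-s)/2}$, and similarly for $|u|$. The main obstacle is the right-hand side in Step 1. We must balance the singular factor $|x|^{-\alpha}$, the growth $|u|^{q-1}$, and the degenerate weight $|x|^\beta$, because the only coercive control is weighted. This means Sobolev must be applied in a weighted form, or through an unweighted $W^{1,s}$ bound, while still leaving a power of $|\tau_hH_{p/2}|$ that can be absorbed. Verifying that the exponent bookkeeping closes exactly under \eqref{alebe}--\eqref{q} is the critical calculation.
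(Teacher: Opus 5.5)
\paragraph{A gap: nothing in your scheme makes the estimate close.}
Your outline has the same skeleton as the paper: the test function $\tau_{-h}(\eta^2\tau_hu)$, the splitting of $\tau_h(|x|^\beta A(Du))$, the bound on $\tau_h|x|^\beta$ through $|x|^{\beta-1}\in L^r$ with $r<n/(1-\beta)$, and the passage from the weighted $L^2$ norm to the unweighted $L^s$ norm. What is missing is the idea that closes the estimate.

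After using \eqref{prodl}, the weight-difference term has the form $|h|\int|x|^{\beta-1}|\tau_hH_{p/2}(Du)|\,|Du|^{p/2}$. Moving the difference onto the datum produces $\int|\tau_hu|\,|u|^{q-1}|x|^{-\alpha-1}$, so the derivative of $|x|^{-\alpha}$ appears, not $|x|^{-\alpha}$ itself.

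The first term cannot be bounded by $|h|^2$ times anything controlled by $\int|x|^\beta|Du|^p$. Pairing it against the weighted $L^2$ norm of $\tau_hH_{p/2}(Du)$ would require $\int|x|^{\beta-2}|Du|^p<\infty$.

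The second term closes in the full range (ii) only if $\tau_hu$ is measured in $L^{nps/(2(n-s))}$. Indeed (ii) is exactly the limit $s\uparrow2$ of $\frac{2(n-s)}{nps}+\frac{q-1}{(ps/2)^*}+\frac{\alpha+1}{n}<1$. With only $\tau_hu\in L^{ps/2}$ you would need the stricter $\alpha<n-2-qn/p^*$.

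Both terms therefore require $|Du|^{p/2}\in L^{ns/(n-s)}$, i.e.\ integrability above the energy level. The paper obtains it as follows:
\begin{itemize}
\item It proves an \emph{a priori} estimate under the qualitative hypothesis $H_{p/2}(Du)\in W^{1,s}_{loc}$, which the regularized solutions satisfy.
\item It applies the unweighted Sobolev embedding of Lemma~\ref{lem:sob}. All remaining terms are then bounded by $|h|^2\big[(CR^{\gamma}+\sigma)\|DH_{p/2}(Du)\|^2_{L^s(B_{\rho_2})}+\text{data}\big]$, with $\gamma=\beta(2-s)-n(2-s)^2/s>0$.
\item After dividing by $|h|^s$, it reabsorbs the top-order term on the larger ball by taking $R<R_0$ and using Lemma~\ref{lem:Giusti2}.
\end{itemize}
This is where $R_0$ comes from: it is forced by the weight-difference terms. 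The $u$-terms are handled by Young's inequality with $\sigma$, not by smallness of $\int_{B_{R_0}}|x|^{-\alpha t}$. Note also that $u\in L^{p^*}$ does not follow from the weighted energy; only $u\in W^{1,ps/2}\subset L^{(ps/2)^*}$ does.

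\paragraph{Your proposed absorption device fails.}
You suggest $|\tau_hDu|\lesssim|\tau_hH_{p/2}(Du)|^{2/p}$ plus a contribution from $\{|Du|\le 2\}$. On the degenerate set there is no second-order information, so $\tau_hDu$ is $O(1)$ there, not $O(|h|)$. For the same reason, bounding $\|\tau_{-h}(\eta^2\tau_hu)\|$ by $|h|\,\|D(\eta^2\tau_hu)\|$ cannot yield the factor $|h|^2$. In the paper, $\tau_hDu$ appears only in the coercive term, and elsewhere only multiplied by $H_{p-1}(Du)$, which vanishes off $\{|Du|>1\}$.

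A smaller point concerns your regularization. Once you add an $\varepsilon$-$p$-Laplacian, you cannot prescribe $u$ itself as boundary datum. The reason is that $u$ is only known to lie in the weighted space, not in $W^{1,p}(B_R)$. You need a mollified datum, as the paper does with $u_\eta$.
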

Let us briefly summarize the proof of the previous Theorem, describing its main points. The central part of the proof consists in establishing a suitable \textit{a priori} estimate for the derivatives of the composition between the function $H_{p/2}(\xi)$ and the gradient of the solution of the problem by testing the equation with a localized difference quotient, carefully decoupling the degenerate and singular weights via generalized Hölder inequalities and an unweighted Sobolev embedding. Next, we introduce a family of regularized problems whose solutions satisfy the assumptions of the a priori estimate and where the spatial weight is bounded away from zero and the singular source is mollified. Uniform bounds are then established, enabling us to pass to the limit as the regularization parameter vanishes.

\section{Preliminaries \label{sec:prelim}}
\selectlanguage{british}%

\subsection{Notation and essential definitions }

\selectlanguage{english}%
\noindent $\hspace*{1em}$In this paper, we denote by $C$ or
$c$ a general positive constant that may vary on different occasions.
Relevant dependencies on parameters and special constants will be
suitably emphasized using parentheses or subscripts. \foreignlanguage{british}{The
norm we use on $\mathbb{R}^{k}$, }\foreignlanguage{american}{$k\in\mathbb{N}$}\foreignlanguage{british}{,
will be the standard Euclidean one and it will be denoted by $\left|\,\cdot\,\right|$.
In particular, for the vectors $\xi,\eta\in\mathbb{R}^{k}$, we write
$\langle\xi,\eta\rangle$ for the usual inner product and $\left|\xi\right|:=\langle\xi,\xi\rangle^{\frac{1}{2}}$
for the corresponding Euclidean norm.}\\
$\hspace*{1em}$In what follows, $B_{r}(x_{0})=\left\{ x\in\mathbb{R}^{n}:\left|x-x_{0}\right|<r\right\} $
will denote the $n$-dimensional open ball centered at $x_{0}$ with
radius $r$. We shall sometimes omit the dependence on the center
when all balls occurring in a proof are concentric. Unless otherwise
stated, different balls in the same context will have the same center.\\
\foreignlanguage{british}{$\hspace*{1em}$For further needs, we now
define the auxiliary function $H_{\delta}:\mathbb{R}^{n}\rightarrow\mathbb{R}^{n}$
by 
\begin{equation}
H_{\delta}(\xi):=\begin{cases}
\begin{array}{cc}
(\vert\xi\vert-1)_{+}^{\delta}\,\frac{\xi}{\left|\xi\right|} & \,\,\mathrm{if}\,\,\,\xi\neq0,\\
0 & \,\,\mathrm{if}\,\,\,\xi=0,
\end{array}\end{cases}\label{eq:Hfun}
\end{equation}
}

\selectlanguage{british}%
\noindent where $\delta>0$ is a parameter. \\
We conclude this first part of the preliminaries by introducing the natural energy framework for our problem. Since $0 < \beta < 1$, the weighted local Sobolev space $W^{1,p}_{loc}(\Omega, |x|^\beta dx)$ is a Banach space. 

Under our structural assumptions on the exponents $\alpha$ and $q$, the integrals involved in the identity below are well-defined and finite. Specifically, the left-hand side converges by Hölder's inequality, whereas the singular right-hand side is controlled via weighted Sobolev-Hardy embeddings. This ensures that the following definition is rigorously posed.

\begin{defn}
\noindent A function $u\in W_{loc}^{1,p}(\Omega, |x|^\beta dx)$
is a \textit{local weak solution} of equation (\ref{equazione mia})
if and only if, for any test function $\varphi\in W_{0}^{1,p}$$(\Omega', |x|^\beta dx)$ with $\Omega'\Subset\Omega$,
the following integral identity holds:
\begin{equation}
\label{def}
-\int_{\Omega}\langle |x|^\beta H_{p-1}(Du),D\varphi\rangle\,\, dx\,=\,\int_{\Omega}\frac{|u|^{q-2}u}{|x|^\alpha}\varphi\,\, dx.
\end{equation}
\end{defn}

\subsection{Algebraic inequalities }

\noindent $\hspace*{1em}$In this section, we gather some relevant
algebraic inequalities that will be needed later on.
\noindent We recall the following estimate, whose proof can be
found in \cite[Chapter 12]{Lind}.
\begin{lem}
\noindent \label{lem:Lind} Let $p\in[2,\infty)$ and $k\in\mathbb{N}$.
Then, for every $\xi,\eta\in\mathbb{R}^{k}$, the following inequality
\[
\vert\xi-\eta\vert^{p}\leq\,C\left|\vert\xi\vert^{\frac{p-2}{2}}\xi-\vert\eta\vert^{\frac{p-2}{2}}\eta\right|^{2}
\]
holds for a constant $C\equiv C(p)>0$.
\end{lem}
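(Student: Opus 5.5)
The plan is to reduce the inequality to a statement about the map $V(\xi):=|\xi|^{\frac{p-2}{2}}\xi$. The goal is to show $|V(\xi)-V(\eta)|\geq c\,|\xi-\eta|^{p/2}$ for all $\xi,\eta\in\mathbb{R}^k$; squaring this gives the claim with $C=c^{-2}$. The case $p=2$ is trivial, since $V$ is then the identity, so I assume $p>2$. Both sides are homogeneous: $V(t\xi)=t^{p/2}V(\xi)$ for $t>0$. I may therefore also assume, by the symmetry $\xi\leftrightarrow\eta$, that $|\xi|\geq|\eta|$, and then normalize $|\xi|=1$. The case $\xi=\eta=0$ is empty.

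The key step is the classical two-sided comparison
\[
|V(\xi)-V(\eta)|^2\simeq (|\xi|^2+|\eta|^2)^{\frac{p-2}{2}}|\xi-\eta|^2 ,
\]
of which only the lower bound is needed. I will prove it through the integral representation
\[
V(\xi)-V(\eta)=\int_0^1 DV(\eta+t(\xi-\eta))\,(\xi-\eta)\,dt .
\]
Here $DV(z)=|z|^{\frac{p-2}{2}}\big(I+\tfrac{p-2}{2}\tfrac{z\otimes z}{|z|^2}\big)$, which is symmetric and satisfies $\langle DV(z)w,w\rangle\ge |z|^{\frac{p-2}{2}}|w|^2$. Taking the inner product with $w=\xi-\eta$ gives
\[
|V(\xi)-V(\eta)|\,|\xi-\eta|\ \ge\ \langle V(\xi)-V(\eta),\xi-\eta\rangle\ \ge\ |\xi-\eta|^2\int_0^1|\eta+t(\xi-\eta)|^{\frac{p-2}{2}}dt .
\]
The integrand is continuous away from the possible zero of the segment, and the formula is valid since $V\in C^1$ when $p>2$.

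It remains to bound the integral from below by $c\,|\xi-\eta|^{\frac{p-2}{2}}$. This is the only step needing care, because the segment may pass through the origin. I will use $|\eta+t(\xi-\eta)|\ge \operatorname{dist}(0,\text{segment})$ together with an elementary estimate: for any segment of length $L=|\xi-\eta|$, the set of $t\in[0,1]$ where $|\eta+t(\xi-\eta)|\ge L/4$ has measure at least $1/2$. Indeed, the set where the distance to the origin is less than $L/4$ is contained in a subinterval of parameter length at most $\tfrac12$, since the points at distance $<L/4$ from $0$ form a ball of diameter $L/2$. Hence
\[
\int_0^1|\eta+t(\xi-\eta)|^{\frac{p-2}{2}}dt\ \ge\ \tfrac12 (L/4)^{\frac{p-2}{2}} .
\]
Combining the two displays yields $|V(\xi)-V(\eta)|\ge c(p)\,|\xi-\eta|^{p/2}$, and squaring concludes with $C=C(p)$.

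With this argument the normalization is not even needed. The main obstacle is just the degenerate case where the segment passes near $0$, and the measure argument above handles it uniformly in $k$.
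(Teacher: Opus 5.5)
Your proof is correct. The paper gives no argument of its own for this lemma; it only cites Lindqvist's notes, so your proof is genuinely additional content.

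The steps all hold:
\begin{itemize}
\item $V\in C^1$ for $p>2$, and $\langle DV(z)w,w\rangle\ge |z|^{\frac{p-2}{2}}|w|^2$.
\item The fundamental theorem of calculus along the segment together with Cauchy--Schwarz gives $|V(\xi)-V(\eta)|\ge |\xi-\eta|\int_0^1|z_t|^{\frac{p-2}{2}}\,dt$.
\item The set $\{t:|z_t|<L/4\}$ is the preimage of a convex ball under an affine map of speed $L$. It is therefore an interval, of length less than $\tfrac12$.
\end{itemize}
Together these yield $|V(\xi)-V(\eta)|\ge \tfrac12\,4^{-\frac{p-2}{2}}|\xi-\eta|^{p/2}$, i.e.\ the lemma with the explicit constant $C=4^{p-1}$.

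Within the paper itself, the quickest route would be to combine the lower bound of Lemma~\ref{D1} with $|\xi-\eta|^2\le 2(|\xi|^2+|\eta|^2)$ and $p\ge 2$. This gives $|\xi-\eta|^{p}\le 2^{\frac{p-2}{2}}c\,|V(\xi)-V(\eta)|^2$ in one line. However, as Lemma~\ref{D1} is stated there, its constant is $c(n,p)$. Your self-contained argument instead gives a constant depending only on $p$, uniformly in the dimension $k$, exactly as the statement claims.

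Two cosmetic points:
\begin{itemize}
\item You announce that you will prove the ``classical two-sided comparison'' with $(|\xi|^2+|\eta|^2)^{\frac{p-2}{2}}$. What you actually prove is the weaker bound with $|\xi-\eta|^{p-2}$ in its place, which is all that is needed, so the announcement should be adjusted.
\item The normalization $|\xi|=1$ and the remark about $\operatorname{dist}(0,\text{segment})$ play no role and can be dropped.
\end{itemize}
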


\selectlanguage{english}%
\noindent Combining \cite[Lemma 2.2]{AceFu} with \cite[Formula (2.4)]{GiaMo},
we obtain the following
\begin{lem}
\label{D1} Let $1<p<\infty$. There exists a constant $c\equiv c(n,p)>0$
such that 
\begin{center}
$c^{-1}(|\xi|^{2}+|\eta|^{2})^{\frac{p-2}{2}}\leq\dfrac{\left|\vert\xi\vert^{\frac{p-2}{2}}\xi-\vert\eta\vert^{\frac{p-2}{2}}\eta\right|^{2}}{|\xi-\eta|^{2}}\leq c\,(|\xi|^{2}+|\eta|^{2})^{\frac{p-2}{2}}$ 
\par\end{center}
\noindent for every $\xi,\eta\in\mathbb{R}^{n}$ with $\xi\neq\eta$. 
\end{lem}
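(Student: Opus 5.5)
The statement just above is Lemma \ref{D1}, a two-sided comparison between $\left|\vert\xi\vert^{\frac{p-2}{2}}\xi-\vert\eta\vert^{\frac{p-2}{2}}\eta\right|^{2}$ and $(|\xi|^{2}+|\eta|^{2})^{\frac{p-2}{2}}|\xi-\eta|^{2}$. I would prove it directly in the usual way for the map $V(\xi)=|\xi|^{\frac{p-2}{2}}\xi$, writing $a=\frac{p-2}{2}>-\frac12$ and using the fundamental theorem of calculus along the segment from $\eta$ to $\xi$.

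\textbf{Step 1 (integral representation).} Set $\xi_t=\eta+t(\xi-\eta)$. If the segment avoids the origin, then
\[
V(\xi)-V(\eta)=\int_0^1 DV(\xi_t)\,dt\,(\xi-\eta).
\]
Here $DV(z)=|z|^{a}\bigl(I+a\,\frac{z\otimes z}{|z|^2}\bigr)$, whose eigenvalues are $|z|^a$ and $(1+a)|z|^a$. So for every unit vector $e$,
\[
\min(1,1+a)\,|z|^a\le\langle DV(z)e,e\rangle\le |DV(z)|\le \max(1,1+a)\,|z|^a .
\]
If the segment passes through $0$, the same formula still holds by continuity, or by perturbing $\eta$ slightly, because $|z|^a$ is integrable along lines when $a>-1$.

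\textbf{Step 2 (upper bound).} From Step 1,
\[
|V(\xi)-V(\eta)|\le c\,|\xi-\eta|\int_0^1|\xi_t|^a\,dt .
\]

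\textbf{Step 3 (lower bound).} Test against the unit vector $e=(\xi-\eta)/|\xi-\eta|$:
\[
|V(\xi)-V(\eta)|\ge\langle V(\xi)-V(\eta),e\rangle\ge \min(1,1+a)\,|\xi-\eta|\int_0^1|\xi_t|^a\,dt .
\]

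\textbf{Step 4 (key estimate).} Both bounds now reduce to the classical two-sided estimate
\[
\int_0^1|\eta+t(\xi-\eta)|^a\,dt\simeq (|\xi|^2+|\eta|^2)^{a/2},
\]
valid for $a>-1$ with constants depending only on $a$. Squaring then gives the lemma.

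This is the main obstacle, and I would split it into two cases.
\begin{itemize}
\item \emph{Case $a\ge0$.} The upper bound is immediate, since $|\xi_t|\le|\xi|+|\eta|$. For the lower bound, note that $|\xi_t|\ge\frac12\max(|\xi|,|\eta|)$ on a subinterval of $t$-length at least $\frac14$. Indeed, $t\mapsto|\xi_t|$ is $|\xi-\eta|$-Lipschitz, and $|\xi-\eta|\le 2\max(|\xi|,|\eta|)$.
\item \emph{Case $-1<a<0$.} The roles are reversed. The lower bound follows from $|\xi_t|^a\ge(|\xi|+|\eta|)^a$. For the upper bound, write $\xi_t=\eta+t(\xi-\eta)$ and use $|\xi_t|\ge|t-t_0|\,|\xi-\eta|\sin\theta$, or more simply $|\xi_t|\ge\operatorname{dist}(\xi_t,\text{line through }0)$. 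This reduces the problem to the one-dimensional integral $\int_0^1|t-t_0|^a\,dt\,|\xi-\eta|^a\le C_a|\xi-\eta|^a$. Combine it with the elementary bound $|\xi_t|\gtrsim\max(|\xi|,|\eta|)$ when $|\xi-\eta|\le\frac12\max(|\xi|,|\eta|)$, treating separately the regime $|\xi-\eta|\ge\frac12\max(|\xi|,|\eta|)$, where $|\xi-\eta|^a\simeq(|\xi|^2+|\eta|^2)^{a/2}$.
\end{itemize}

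With Step 4 in hand, Steps 2 and 3 together, squared, give exactly the claimed inequality, with $c=c(p)$ (hence also $c(n,p)$).
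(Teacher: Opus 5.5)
Your proof is correct. The FTC representation along $\xi_t$, the eigenvalue bounds $\min(1,1+a)|z|^a\le\langle DV(z)e,e\rangle$ and $|DV(z)|\le\max(1,1+a)|z|^a$, and the two-sided estimate $\int_0^1|\xi_t|^a\,dt\simeq(|\xi|^2+|\eta|^2)^{a/2}$ for $a>-1$ together give the lemma with $c=c(p)$. Your case analysis for both signs of $a$ checks out.

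The paper gives no proof of its own. It imports the inequality by combining Lemma 2.2 of Acerbi--Fusco with formula (2.4) of Giaquinta--Modica, one covering the subquadratic range and the other the superquadratic one. Those results rest on the same mechanism you use: the integral of $DV$ along the segment plus the two-sided bound on $\int_0^1|\eta+t(\xi-\eta)|^{a}\,dt$. So your split into $a<0$ and $a\ge 0$ just mirrors the paper's split into two references. What your version adds is a single self-contained argument for all $1<p<\infty$, with the constant visibly independent of $n$.

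Two presentational fixes:
\begin{itemize}
\item In the case $a<0$, the inequality you need is $|\xi_t|\ge|\langle\xi_t,e\rangle|=|t-t_0|\,|\xi-\eta|$, with $e=(\xi-\eta)/|\xi-\eta|$ and $t_0=-\langle\eta,e\rangle/|\xi-\eta|$. This is the distance to the hyperplane through $0$ orthogonal to $\xi-\eta$, not to a line, and no $\sin\theta$ factor appears.
\item When the segment meets the origin, don't perturb $\eta$ (that would require a dominated-convergence argument). Instead observe that $t\mapsto V(\xi_t)$ is absolutely continuous: for $a<0$ one has $|DV(\xi_t)|\le|t-t_0|^{a}|\xi-\eta|^{a}$, which is integrable since $a>-1$. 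For $a\ge 0$, $DV$ is bounded along the segment, so nothing is needed.
\end{itemize}
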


\selectlanguage{british}%
\noindent $\hspace*{1em}$\foreignlanguage{english}{For the function
$H_{p-1}(\xi)$ defined at (\ref{eq:Hfun}) with $\delta=p-1$, we record
the following estimates, }whose proof can be found in\foreignlanguage{english}{ \cite[Lemma 2.5]{AmG}
(with $\lambda=1$) and \cite[Lemma 2.5]{AM}}. 
\selectlanguage{english}%

\begin{lem}
\label{lem:Brasco} Let \foreignlanguage{british}{$p\in[2,\infty)$}. Then, there exists a constant $\nu\equiv \nu(p)>0$
such that 
\begin{equation}
\langle H_{p-1}(\xi)-H_{p-1}(\eta),\xi-\eta\rangle\,\geq\,\nu\,\vert H_{\frac{p}{2}}(\xi)-H_{\frac{p}{2}}(\eta)\vert^{2},\label{eq:BraAmb}
\end{equation}
for every $\xi,\eta\in\mathbb{R}^{n}$.
\end{lem}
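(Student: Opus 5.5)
The statement immediately above is Lemma \ref{lem:Brasco}, and the plan is to reduce it to Lemma \ref{D1} through the radial structure of $H_\delta$. For $\delta>0$ write $H_\delta(\xi)=g_\delta(|\xi|)\frac{\xi}{|\xi|}$ with $g_\delta(t)=(t-1)_+^\delta$. Both sides of \eqref{eq:BraAmb} are symmetric in $(\xi,\eta)$, so we split into three cases according to how many of $\xi,\eta$ lie outside the closed unit ball.

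\emph{Case 1: $|\xi|\le1$ and $|\eta|\le1$.} Both sides vanish and there is nothing to prove.

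\emph{Case 2: $|\xi|>1\ge|\eta|$.} Here $H_{p-1}(\eta)=H_{p/2}(\eta)=0$, so the right-hand side is $\nu(|\xi|-1)^p$. The left-hand side equals $(|\xi|-1)^{p-1}\bigl(|\xi|-\langle \xi/|\xi|,\eta\rangle\bigr)$. Since $\langle\xi/|\xi|,\eta\rangle\le|\eta|\le1$, we get $|\xi|-\langle\xi/|\xi|,\eta\rangle\ge|\xi|-1$. Hence \eqref{eq:BraAmb} holds with $\nu=1$.

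\emph{Case 3: $|\xi|,|\eta|>1$.} This is the main case. Set $\hat\xi=\xi/|\xi|$, $\hat\eta=\eta/|\eta|$, $a=|\xi|-1$, $b=|\eta|-1$, and introduce the shrunk vectors $\xi'=a\hat\xi$, $\eta'=b\hat\eta$. Then
\[
H_{p-1}(\xi)=|\xi'|^{p-2}\xi', \qquad H_{p/2}(\xi)=|\xi'|^{\frac{p-2}{2}}\xi',
\]
and similarly for $\eta$. We also have $\xi-\eta=(\xi'-\eta')+(\hat\xi-\hat\eta)$. The left-hand side therefore splits as
\[
\langle |\xi'|^{p-2}\xi'-|\eta'|^{p-2}\eta',\,\xi'-\eta'\rangle+\langle a^{p-1}\hat\xi-b^{p-1}\hat\eta,\,\hat\xi-\hat\eta\rangle .
\]
The second term is nonnegative. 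Indeed, it equals $(a^{p-1}+b^{p-1})(1-\langle\hat\xi,\hat\eta\rangle)\ge0$, using $|\hat\xi|=|\hat\eta|=1$.

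It remains to bound the first term from below. The standard $p$-Laplacian monotonicity gives
\[
\langle |\xi'|^{p-2}\xi'-|\eta'|^{p-2}\eta',\xi'-\eta'\rangle\ge c(p)\,(|\xi'|^2+|\eta'|^2)^{\frac{p-2}{2}}|\xi'-\eta'|^2 .
\]
By the upper bound in Lemma \ref{D1}, the right-hand side here is at least $c\,\bigl||\xi'|^{\frac{p-2}{2}}\xi'-|\eta'|^{\frac{p-2}{2}}\eta'\bigr|^2=c\,|H_{p/2}(\xi)-H_{p/2}(\eta)|^2$. Combining the three cases gives \eqref{eq:BraAmb} with $\nu=\min\{1,c(p)\}$.

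The only delicate point is the decomposition in Case 3. One has to check that the cross term $\langle a^{p-1}\hat\xi-b^{p-1}\hat\eta,\hat\xi-\hat\eta\rangle$ really has a sign. This holds precisely because $H_{p-1}$ is radial with a nondecreasing profile $g_{p-1}$. The $p$-Laplacian monotonicity inequality is classical; if one prefers not to cite it separately, it follows from Lemma \ref{D1} together with the identity $\langle|\xi|^{p-2}\xi-|\eta|^{p-2}\eta,\xi-\eta\rangle\ge\frac12(|\xi|^{p-2}+|\eta|^{p-2})|\xi-\eta|^2$.
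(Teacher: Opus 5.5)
Your proof is correct. The paper gives no argument of its own for Lemma \ref{lem:Brasco}: it quotes \cite[Lemma 2.5]{AmG} (with $\lambda=1$) and \cite[Lemma 2.5]{AM}, so your proposal replaces a citation by a self-contained derivation.

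Your route makes one point transparent. Set $P(\xi)=(|\xi|-1)_+\frac{\xi}{|\xi|}$. Then $H_{p-1}(\xi)=|P(\xi)|^{p-2}P(\xi)$ and $H_{p/2}(\xi)=|P(\xi)|^{\frac{p-2}{2}}P(\xi)$. After a nonnegative cross term is discarded, the lemma reduces to classical $p$-Laplacian monotonicity combined with the upper bound in Lemma \ref{D1}, which the paper already records.

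Your three cases can be handled at once. The vector $\xi-P(\xi)$ is the metric projection $\pi(\xi)$ of $\xi$ onto $\overline{B_1}$, so $H_{p-1}(\xi)=|P(\xi)|^{p-2}(\xi-\pi(\xi))$. The obtuse-angle property $\langle \xi-\pi(\xi),\pi(\eta)-\pi(\xi)\rangle\le 0$ then gives $\langle H_{p-1}(\xi)-H_{p-1}(\eta),\pi(\xi)-\pi(\eta)\rangle\ge0$ for all $\xi,\eta$. This also shows that the sign of the cross term comes only from $|P|^{p-2}\ge0$, not from monotonicity of the profile as you claim. Monotonicity enters only through the $p$-Laplacian term.

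Three bookkeeping points.
\begin{itemize}
\item The constant you actually obtain is $\min\{1,c_1/c_2\}$, with $c_1$ from the monotonicity inequality and $c_2$ from Lemma \ref{D1}, not $\min\{1,c(p)\}$.
\item Lemma \ref{D1} is recorded with $c=c(n,p)$, while the statement requires $\nu=\nu(p)$. This is harmless, but you should say why. One reason: any two vectors lie in a two-dimensional subspace and all quantities are invariant under linear isometries. A direct reason: for $V(z)=|z|^{\frac{p-2}{2}}z$ the operator norm of $DV(z)$ is $\frac{p}{2}|z|^{\frac{p-2}{2}}$, whence $|V(a)-V(b)|\le\frac{p}{2}(|a|+|b|)^{\frac{p-2}{2}}|a-b|$ with no dependence on $n$.
\item The ``identity'' in your last sentence is an inequality. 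It follows from the identity $\langle|a|^{p-2}a-|b|^{p-2}b,a-b\rangle=\frac{1}{2}(|a|^{p-2}+|b|^{p-2})|a-b|^2+\frac{1}{2}(|a|^{p-2}-|b|^{p-2})(|a|^2-|b|^2)$ by dropping the last term, which is nonnegative for $p\ge2$.
\end{itemize}
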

\begin{lem}
\label{eq:BraAmb}
For every $(\theta,\varepsilon)\in \mathbb{R}^+\times\mathbb{R}^+$ with $\theta<\varepsilon,$ \foreignlanguage{british} there exist two positive constants $\beta_1(\theta,\varepsilon)$ and $\beta_2(\theta,\varepsilon, n)$
such that 
\begin{equation}
\beta_1 |H_{\varepsilon}(\xi)-H_{\varepsilon}(\eta)|\leq\,\,\frac{\vert H_{\theta}(\xi)-H_{\theta}(\eta)\vert}{\big((|\xi|-1)^\varepsilon_++(|\eta|-1)^\varepsilon_+\big)^{\frac{\theta-\varepsilon}{\varepsilon}}}\leq\beta_2|H_{\varepsilon}(\xi)-H_{\varepsilon}(\eta)|
\end{equation}
for every $\xi,\eta\in\mathbb{R}^{n}$. 
\end{lem}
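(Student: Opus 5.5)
The idea is to reduce the statement to Lemma \ref{D1} by noticing that $H_\varepsilon$ is a power-type function of $H_\theta$. Set $a:=(|\xi|-1)_+$ and $b:=(|\eta|-1)_+$, and $\gamma:=\varepsilon/\theta>1$. Since $\frac{\theta-\varepsilon}{\varepsilon}<0$, the middle quantity in the statement equals
\[
\vert H_{\theta}(\xi)-H_{\theta}(\eta)\vert\,\big(a^{\varepsilon}+b^{\varepsilon}\big)^{\frac{\varepsilon-\theta}{\varepsilon}} .
\]
So we must show that this is comparable to $|H_\varepsilon(\xi)-H_\varepsilon(\eta)|$. When $a=b=0$ all three quantities vanish, and the statement is read with that convention. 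Hence we may assume $a+b>0$.

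The key observation is the pointwise identity
\[
H_\varepsilon(\zeta)=|H_\theta(\zeta)|^{\gamma-1}H_\theta(\zeta)\qquad\text{for every }\zeta\in\mathbb{R}^n .
\]
It holds because $|H_\theta(\zeta)|=(|\zeta|-1)_+^\theta$, so that $|H_\theta(\zeta)|^{\gamma-1}H_\theta(\zeta)=(|\zeta|-1)_+^{\varepsilon-\theta+\theta}\frac{\zeta}{|\zeta|}$. When $|\zeta|\le 1$ both sides are $0$; this is consistent because $\gamma-1>0$, so the map $X\mapsto |X|^{\gamma-1}X$ sends $0$ to $0$.

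Now write $X:=H_\theta(\xi)$ and $Y:=H_\theta(\eta)$, and apply Lemma \ref{D1} with exponent $p:=2\gamma$, so that $\frac{p-2}{2}=\gamma-1$. For $X\neq Y$ this gives
\[
c^{-1}\big(|X|^2+|Y|^2\big)^{\gamma-1}|X-Y|^2\le \big||X|^{\gamma-1}X-|Y|^{\gamma-1}Y\big|^2\le c\,\big(|X|^2+|Y|^2\big)^{\gamma-1}|X-Y|^2,
\]
with $c=c(n,\gamma)$. The case $X=Y$ is trivial, since then both sides vanish. By the identity above, the middle term equals $|H_\varepsilon(\xi)-H_\varepsilon(\eta)|^2$.

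It remains to compare the weights. We have $|X|=a^\theta$ and $|Y|=b^\theta$, so
\[
\big(|X|^2+|Y|^2\big)^{\frac{\gamma-1}{2}}=\big(a^{2\theta}+b^{2\theta}\big)^{\frac{\varepsilon-\theta}{2\theta}} .
\]
By the elementary equivalence $(s^r+t^r)^{1/r}\approx_r \max\{s,t\}$ for $s,t\ge0$, used once with $r=2\theta$ and once with $r=\varepsilon$, this is comparable to $\max\{a,b\}^{\varepsilon-\theta}$. That in turn is comparable to $(a^\varepsilon+b^\varepsilon)^{\frac{\varepsilon-\theta}{\varepsilon}}$, with constants depending only on $\theta$ and $\varepsilon$.

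Taking square roots and inserting this equivalence yields both inequalities. The constants $\beta_1$ and $\beta_2$ depend on $\theta$, $\varepsilon$ and, through Lemma \ref{D1}, on $n$. The only delicate points are verifying the identity $H_\varepsilon=|H_\theta|^{\gamma-1}H_\theta$, including on the flat region $\{|\zeta|\le1\}$, and using $\gamma>1$ so that the relevant power map is continuous at $0$. I expect no real obstacle beyond this bookkeeping.
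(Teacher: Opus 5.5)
Your proof is correct. The paper gives no proof of its own here: it defers to the references cited just before Lemma \ref{lem:Brasco}. Your mechanism, the identity $H_\varepsilon=|H_\theta|^{\gamma-1}H_\theta$ with $\gamma=\varepsilon/\theta>1$ combined with the $V$-function estimate of Lemma \ref{D1}, is the standard one for results of this type.

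A slightly more direct variant reverses the roles. Whenever $H_\varepsilon(\zeta)\neq 0$ one has $H_\theta(\zeta)=|H_\varepsilon(\zeta)|^{\frac{\theta}{\varepsilon}-1}H_\varepsilon(\zeta)$. So Lemma \ref{Duzaar} with exponent $\theta/\varepsilon$, applied to $H_\varepsilon(\xi),H_\varepsilon(\eta)$, produces exactly the weight $\big(|H_\varepsilon(\xi)|+|H_\varepsilon(\eta)|\big)^{\frac{\theta-\varepsilon}{\varepsilon}}=\big((|\xi|-1)_+^\varepsilon+(|\eta|-1)_+^\varepsilon\big)^{\frac{\theta-\varepsilon}{\varepsilon}}$ that appears in the statement. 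No comparison of weights is then needed. The price is that Lemma \ref{Duzaar} excludes zero vectors, so the case $H_\varepsilon(\eta)=0\neq H_\varepsilon(\xi)$ has to be checked by hand; there both sides equal $(|\xi|-1)_+^\theta$. Your choice $\gamma>1$ avoids this case distinction, because Lemma \ref{D1} allows zero vectors. The cost is the harmless comparison through $\max\{a,b\}$.

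One small point concerns the constants. The statement asserts $\beta_1=\beta_1(\theta,\varepsilon)$, but your $\beta_1$ inherits the $n$-dependence of the constant in Lemma \ref{D1}. This is easily repaired. The only bound you need for $\beta_1$ is $\big||X|^{\gamma-1}X-|Y|^{\gamma-1}Y\big|\le\gamma(|X|+|Y|)^{\gamma-1}|X-Y|$. It follows from the mean value theorem, since the differential of $Z\mapsto|Z|^{\gamma-1}Z$ has operator norm $\gamma|Z|^{\gamma-1}$, and the resulting constant does not depend on $n$.
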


\begin{lem}\label{Duzaar}
  For any $\theta > 0$, there exists a constant $c = c(\theta)$ such that, for all $\eta, \zeta \in \mathbb{R}^n \setminus \{0\}$, $n \in \mathbb{N}$, we have
\[
\frac{1}{c} \left||\eta|^{\theta - 1} \eta - |\zeta|^{\theta - 1} \zeta  \right| \leq \left( |\eta| + |\zeta| \right)^{\theta - 1} |\eta - \zeta| \leq c \left| \, |\eta|^{\theta - 1} \eta - |\zeta|^{\theta - 1} \zeta \,  \right|.
\]
\end{lem}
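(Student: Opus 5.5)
The statement to prove is Lemma \ref{Duzaar}. I would reduce it to a one-variable homogeneity argument. Write $V(\eta):=|\eta|^{\theta-1}\eta$. Both sides of the claimed inequality are symmetric in $(\eta,\zeta)$ and positively homogeneous of degree $\theta$ under $(\eta,\zeta)\mapsto(t\eta,t\zeta)$, $t>0$. Both are also invariant under simultaneous rotations. Hence I may assume $|\eta|=1\ge|\zeta|$, and it suffices to show that the ratio
\[
F(\eta,\zeta):=\frac{|V(\eta)-V(\zeta)|}{(|\eta|+|\zeta|)^{\theta-1}|\eta-\zeta|}
\]
is bounded above and below by positive constants on the set $\{|\eta|=1,\ 0<|\zeta|\le1,\ \zeta\neq\eta\}$. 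The case $\eta=\zeta$ is trivial, since both sides vanish.

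\textbf{Upper bound.} I would integrate along the segment $\gamma(t)=\zeta+t(\eta-\zeta)$:
\[
V(\eta)-V(\zeta)=\int_0^1 DV(\gamma(t))(\eta-\zeta)\,dt,\qquad |DV(\xi)|\le c(\theta)|\xi|^{\theta-1}.
\]
\emph{Case $\theta\ge1$.} Here $|\gamma(t)|\le|\eta|+|\zeta|$, so the bound follows at once.

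\emph{Case $\theta<1$.} Here the integral $\int_0^1|\gamma(t)|^{\theta-1}dt$ must be estimated. The segment may pass near the origin, where the weight is singular, but it is still integrable. The standard estimate (as in the Acerbi--Fusco / Giaquinta--Modica computations behind Lemma \ref{D1}) gives
\[
\int_0^1|\gamma(t)|^{\theta-1}dt\le c(\theta)(|\eta|+|\zeta|)^{\theta-1}.
\]
To prove it, I would observe that $|\gamma(t)|\ge \big||\eta-\zeta|t-|\zeta|\big|$ when $|\eta-\zeta|\gtrsim|\eta|+|\zeta|$, and integrate $|at-b|^{\theta-1}$, which is finite since $\theta-1>-1$. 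When $|\eta-\zeta|\ll|\eta|+|\zeta|$, one simply has $|\gamma|\sim|\eta|+|\zeta|$. Alternatively, the upper bound can be quoted from Lemma \ref{D1} with $p=\theta+1$, after noting that $(|\xi|^2+|\eta|^2)^{1/2}\sim|\xi|+|\eta|$; this exponent is exactly $\theta-1$ via $\frac{p-2}{2}\cdot 2$. Since Lemma \ref{D1} is stated for $1<p<\infty$, that is, for all $\theta>0$, this quotation gives both bounds directly.

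Indeed, set $p=\theta+1$. Lemma \ref{D1} compares $|V(\xi)-V(\eta)|^2$ with $(|\xi|^2+|\eta|^2)^{\frac{p-2}{2}}|\xi-\eta|^2$. Taking square roots gives
\[
|V(\xi)-V(\eta)|\sim(|\xi|^2+|\eta|^2)^{\frac{\theta-1}{2}}|\xi-\eta|.
\]
Then $(|\xi|^2+|\eta|^2)^{1/2}\le|\xi|+|\eta|\le\sqrt2(|\xi|^2+|\eta|^2)^{1/2}$ converts this to $(|\xi|+|\eta|)^{\theta-1}$, losing only a factor $2^{|\theta-1|/2}$. This is my primary route.

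\textbf{Self-contained lower bound.} If one does not want to rely on Lemma \ref{D1}, I would argue directly on the normalized configuration.
\begin{itemize}
\item \emph{Non-collinear part.} Decompose $|V(\eta)-V(\zeta)|^2=1-2|\zeta|^\theta\cos\phi+|\zeta|^{2\theta}$, where $\phi$ is the angle between $\eta$ and $\zeta$. Compare it with $|\eta-\zeta|^2=1-2|\zeta|\cos\phi+|\zeta|^2$. Setting $r=|\zeta|\in(0,1]$, both expressions equal $(1-r^a)^2+2r^a(1-\cos\phi)$, with $a=\theta$ and $a=1$ respectively.
\item \emph{Radial term.} The ratio $(1-r^\theta)/(1-r)$ is bounded above and below on $[0,1)$ by constants depending on $\theta$, with limit $\theta$ at $r=1$.
\item \emph{Angular term.} The angular terms differ by the factor $r^{\theta-1}$. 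This is where the main obstacle lies: near $r\to0$ with $\phi$ large, the angular terms are negligible against the radial term $\sim1$. So I would split into $r\le1/2$, where both quantities are $\sim1$, and $r\ge1/2$, where $r^{\theta-1}\sim1$.
\end{itemize}
This gives two-sided bounds for $F$ with $c=c(\theta)$, completing the proof.
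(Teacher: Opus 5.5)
The paper states this lemma without proof, as a standard fact from the literature, and uses it only with $\theta=q-1>1$ in the estimate of $I_6$. Your argument therefore has to stand on its own, and your declared primary route does not.

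\textbf{The quotation of Lemma \ref{D1} is wrong.} In Lemma \ref{D1} the vector field is $|\xi|^{\frac{p-2}{2}}\xi$. To match $V(\xi)=|\xi|^{\theta-1}\xi$ you need $\frac{p-2}{2}=\theta-1$, i.e.\ $p=2\theta$, not $p=\theta+1$. With $p=\theta+1$, Lemma \ref{D1} concerns $|\xi|^{\frac{\theta-1}{2}}\xi$. Moreover, taking square roots there gives the weight $(|\xi|^2+|\eta|^2)^{\frac{\theta-1}{4}}$, not $(|\xi|^2+|\eta|^2)^{\frac{\theta-1}{2}}$: you did not halve the exponent of the weight. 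The comparison you finally display is exactly what the correct choice $p=2\theta$ gives. But then the hypothesis $1<p<\infty$ of Lemma \ref{D1} becomes $\theta>\frac12$, not $\theta>0$. So this route proves the lemma only for $\theta>\frac12$; that does cover the paper's use, but it leaves $0<\theta\le\frac12$ open.

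\textbf{Your self-contained argument is correct and closes the proof.} It actually gives both inequalities, not just the lower one, for every $\theta>0$. Normalize $|\eta|=1\ge r=|\zeta|$ and let $\phi$ be the angle between $\eta$ and $\zeta$. Then $|V(\eta)-V(\zeta)|^2=(1-r^\theta)^2+r^{\theta-1}\cdot 2r(1-\cos\phi)$ and $|\eta-\zeta|^2=(1-r)^2+2r(1-\cos\phi)$. Together with $(1-r^\theta)/(1-r)\in[c(\theta),C(\theta)]$ on $[0,1)$, this gives termwise comparability when $r\ge\frac12$, where $r^{\theta-1}$ is comparable to $1$. When $r\le\frac12$, one has $|\eta-\zeta|\in[\frac12,\frac32]$ and $|V(\eta)-V(\zeta)|\in[1-2^{-\theta},1+2^{-\theta}]$. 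In all cases $(1+r)^{\theta-1}$ lies between $\min(1,2^{\theta-1})$ and $\max(1,2^{\theta-1})$.

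You should make this the main proof. The segment-integration upper bound is also valid for all $\theta>0$, since $\theta-1>-1$ keeps $\int_0^1|\zeta+t(\eta-\zeta)|^{\theta-1}\,dt$ under control, but it then becomes redundant. Either drop the Lemma \ref{D1} shortcut or restrict it, with $p=2\theta$, to $\theta>\frac12$.
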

We conclude by recalling a well-known iteration Lemma, whose proof can be found in \cite[Lemma 6.1]{Giu}.
\begin{lem}
    \label{lem:Giusti2} Let \foreignlanguage{british}{$Z(t)$} be a bounded non-negative function in the interval $[\rho,R].$ Assume that for $\rho\leq r<t\leq R$ we have $$Z(r)\leq[A(t-r)^{-\theta}+B(t-r)^{-\sigma}+C]+\lambda Z(t)$$ with $A, B, C\geq0,\textbf{ } \theta>\sigma>0$ and $0\leq \lambda<1.$ Then,
    $$Z(\rho)\leq c(\theta,\lambda)[A(R-\rho)^{-\theta}+B(R-\rho)^{-\sigma}+C].$$
\end{lem}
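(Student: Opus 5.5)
We iterate the assumed inequality along a geometric sequence of radii that increases from $\rho$ to $R$, and then sum the resulting geometric series. Fix a parameter $\tau\in(0,1)$, to be chosen depending only on $\theta$ and $\lambda$. Define $r_0=\rho$ and
$$r_{i+1}=r_i+(1-\tau)\tau^i(R-\rho),$$
so that $r_i\uparrow R$ and $r_{i+1}-r_i=(1-\tau)\tau^i(R-\rho)$.

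Applying the hypothesis with $r=r_i$ and $t=r_{i+1}$ gives
$$Z(r_i)\le A(1-\tau)^{-\theta}\tau^{-i\theta}(R-\rho)^{-\theta}+B(1-\tau)^{-\sigma}\tau^{-i\sigma}(R-\rho)^{-\sigma}+C+\lambda Z(r_{i+1}).$$
Iterating $k$ times, by induction, yields
$$Z(\rho)\le \lambda^k Z(r_k)+\sum_{i=0}^{k-1}\lambda^i\Big[A(1-\tau)^{-\theta}\tau^{-i\theta}(R-\rho)^{-\theta}+B(1-\tau)^{-\sigma}\tau^{-i\sigma}(R-\rho)^{-\sigma}+C\Big].$$

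The main point is the choice of $\tau$. We need the series $\sum_i(\lambda\tau^{-\theta})^i$ and $\sum_i(\lambda\tau^{-\sigma})^i$ to converge. Since $\sigma<\theta$ and $\tau<1$, we have $\tau^{-\sigma}\le\tau^{-\theta}$, so it is enough to require $\lambda\tau^{-\theta}<1$. This holds for any $\tau$ with $\lambda^{1/\theta}<\tau<1$; for instance, take $\tau=\big(\frac{1+\lambda}{2}\big)^{1/\theta}$ when $\lambda>0$, and any $\tau$ when $\lambda=0$. With this choice $\tau$ depends only on $\theta$ and $\lambda$. Then $(1-\tau)^{-\sigma}\le(1-\tau)^{-\theta}$, and $\sum_i\lambda^i\le(1-\lambda)^{-1}$ controls the $C$-term.

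Finally, $Z$ is bounded on $[\rho,R]$, say by $M$. Hence $\lambda^kZ(r_k)\le\lambda^kM\to0$ as $k\to\infty$; this is the only place the boundedness assumption is used, and it is essential. Letting $k\to\infty$ we obtain
$$Z(\rho)\le c(\theta,\lambda)\big[A(R-\rho)^{-\theta}+B(R-\rho)^{-\sigma}+C\big],\qquad c=(1-\tau)^{-\theta}\big(1-\lambda\tau^{-\theta}\big)^{-1},$$
which is the claimed estimate.
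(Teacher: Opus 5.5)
Your proof is correct and is essentially the standard argument. The paper does not prove this lemma itself but cites Lemma 6.1 of Giusti's book, whose proof is exactly this iteration along the radii $r_{i+1}=r_i+(1-\tau)\tau^i(R-\rho)$ with $\tau$ chosen so that $\lambda\tau^{-\theta}<1$, followed by letting $k\to\infty$ using the boundedness of $Z$; your constant $c=(1-\tau)^{-\theta}(1-\lambda\tau^{-\theta})^{-1}$ indeed dominates all three coefficients.
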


\subsection{Difference quotients}

\label{subsec:DiffOpe}

\noindent $\hspace*{1em}$We recall here the definition and some elementary
properties of the difference quotients that will be useful in the
following (see, for example, \cite{Giu}). 
\begin{defn}
\noindent For every vector-valued function $F:\mathbb{R}^{n}\rightarrow\mathbb{R}^{k}$
the \textit{finite difference operator }in the direction $x_{j}$
is defined by 
\[
\tau_{j,h}F(x)=F(x+he_{j})-F(x),
\]
where $h\in\mathbb{R}$, $e_{j}$ is the unit vector in the direction
$x_{j}$ and $j\in\{1,\ldots,n\}$.\\
 $\hspace*{1em}$The \textit{difference quotient} of $F$ with respect
to $x_{j}$ is defined for $h\in\mathbb{R}\setminus\{0\}$ by 
\[
\Delta_{j,h}F(x)\,=\,\frac{\tau_{j,h}F(x)}{h}\,.
\]
\end{defn}

\noindent When no confusion can arise, we shall omit the index $j$
and simply write $\tau_{h}$ or $\Delta_{h}$ instead of $\tau_{j,h}$
or $\Delta_{j,h}$, respectively. 
\selectlanguage{british}%
\begin{prop}
\label{prop}
\noindent Let $\Omega\subset\mathbb{R}^{n}$ be an open set and let
$F\in W^{1,q}(\Omega)$, with $q\geq1$. Moreover, let $G:\Omega\rightarrow\mathbb{R}$
be a measurable function and consider the set
\[
\Omega_{\vert h\vert}:=\left\{ x\in\Omega:\mathrm{dist}\left(x,\partial\Omega\right)>\vert h\vert\right\} .
\]
\foreignlanguage{english}{Then:}\\
\foreignlanguage{english}{}\\
\foreignlanguage{english}{$\mathrm{(}\mathrm{i}\mathrm{)}$ $\Delta_{h}F\in W^{1,q}\left(\Omega_{\vert h\vert}\right)$
and $\partial_{i}(\Delta_{h}F)=\Delta_{h}(\partial_{i}F)$ for every
$\,i\in\{1,\ldots,n\}$.}\\

\selectlanguage{english}%
\noindent $\mathrm{(}\mathrm{ii}\mathrm{)}$ If at least one of the
functions $F$ or $G$ has support contained in $\Omega_{\vert h\vert}$,
then 
\[
\int_{\Omega}F\,\Delta_{h}G\,\, dx\,=\,-\int_{\Omega}G\,\Delta_{-h}F\,\, dx.
\]
$\mathrm{(}\mathrm{iii}\mathrm{)}$ We have 
\[
\Delta_{h}(FG)(x)=F(x+he_{j})\Delta_{h}G(x)\,+\,G(x)\Delta_{h}F(x).
\]
\end{prop}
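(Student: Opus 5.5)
The statement "immediately above" is Proposition \ref{prop}, the elementary properties of difference quotients. I prove each item directly from the definitions, using only the translation invariance of Lebesgue measure and the definition of weak derivatives.

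For (i), fix $h\neq0$ and $j$. For $x\in\Omega_{|h|}$ the point $x+he_j$ lies in $\Omega$, so $F(\cdot+he_j)$ is well defined on $\Omega_{|h|}$. Take $\varphi\in C_c^\infty(\Omega_{|h|})$. The change of variables $y=x+he_j$ sends $\operatorname{supp}\varphi$ into a compact subset of $\Omega$. It gives
\[
\int F(x+he_j)\,\partial_i\varphi(x)\,dx=\int F(y)\,\partial_i\big[\varphi(y-he_j)\big]\,dy=-\int\partial_iF(y)\,\varphi(y-he_j)\,dy=-\int(\partial_iF)(x+he_j)\,\varphi(x)\,dx .
\]
So the translate of $F$ belongs to $W^{1,q}(\Omega_{|h|})$, with weak derivative equal to the translate of $\partial_iF$. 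Both $F(\cdot+he_j)$ and $F$ lie in $L^q(\Omega_{|h|})$, again by translation invariance of the integral. Subtracting the identity for $F$ and dividing by $h$ yields $\Delta_hF\in W^{1,q}(\Omega_{|h|})$ and $\partial_i(\Delta_hF)=\Delta_h(\partial_iF)$.

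For (ii), suppose one of $F,G$ is supported in $\Omega_{|h|}$ (extended by zero), so that all translates appearing below stay inside $\Omega$. Write
\[
\int_\Omega F\,\Delta_hG\,dx=\frac1h\int F(x)G(x+he_j)\,dx-\frac1h\int F(x)G(x)\,dx .
\]
In the first integral substitute $y=x+he_j$; it becomes $\frac1h\int F(y-he_j)G(y)\,dy$. Combining the two terms gives
\[
\int_\Omega F\,\Delta_hG\,dx=\int G(y)\,\frac{F(y-he_j)-F(y)}{h}\,dy=-\int G\,\Delta_{-h}F\,dy,
\]
since $\Delta_{-h}F(y)=\frac{F(y-he_j)-F(y)}{-h}$. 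Integrability of the products follows from the assumptions wherever the identity is used (for instance, $F,G$ in dual Lebesgue spaces).

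For (iii), the computation is pointwise:
\[
h\,\Delta_h(FG)(x)=F(x+he_j)G(x+he_j)-F(x+he_j)G(x)+F(x+he_j)G(x)-F(x)G(x).
\]
Dividing by $h$ gives $F(x+he_j)\Delta_hG(x)+G(x)\Delta_hF(x)$.

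The only point needing care is in (i) and (ii): every translate must remain inside $\Omega$. This is exactly why the domain is restricted to $\Omega_{|h|}$ and why the support hypothesis appears in (ii). Once that is ensured, the rest is the change of variables.
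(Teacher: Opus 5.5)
Your proof is correct and is the standard one: the paper gives no proof of this proposition and only refers to Giusti's book, where these properties are checked by the same translation and change-of-variables computations you carry out. Your remark that (ii) tacitly needs $F\,G$ and $F\,G(\cdot+he_j)$ to be integrable is a fair clarification of the statement rather than a gap in your argument.
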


\selectlanguage{english}%
\noindent The next result about the finite difference operator is
a kind of integral version of the Lagrange Theorem and its proof can
be found in \cite[Lemma 8.1]{Giu}. 
\begin{lem}
\noindent \label{lem:Giusti1} If $0<\rho<R$, $\vert h\vert<\frac{R-\rho}{2}$,
$1<q<+\infty$ and $F\in L^{q}(B_{R},\mathbb{R}^{k})$ is such that
$DF\in L^{q}(B_{R},\mathbb{R}^{k\times n})$, then 
\[
\int_{B_{\rho}}\left|\tau_{h}F(x)\right|^{q}\, \, dx\,\leq\,c^{q}(n)\,\vert h\vert^{q}\int_{B_{R}}\left|DF(x)\right|^{q}\, \, dx.
\]
Moreover 
\[
\int_{B_{\rho}}\left|F(x+he_{j})\right|^{q}\, \, dx\,\leq\,\int_{B_{R}}\left|F(x)\right|^{q}\, \, dx.
\]
\end{lem}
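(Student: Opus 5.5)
The plan is to reduce to smooth functions by density and then integrate the fundamental theorem of calculus along the segment in direction $e_j$.

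First assume $F\in C^1(\overline{B_R},\mathbb{R}^k)$. Fix $x\in B_\rho$ and $|h|<\frac{R-\rho}{2}$. Then the whole segment $\{x+the_j:\,t\in[0,1]\}$ lies in $B_{\rho+|h|}\subset B_R$, and we can write
\[
\tau_hF(x)=F(x+he_j)-F(x)=h\int_0^1 \partial_jF(x+the_j)\,dt .
\]
By Jensen's (or Hölder's) inequality in $t$, since $q>1$,
\[
|\tau_hF(x)|^q\le |h|^q\int_0^1|DF(x+the_j)|^q\,dt .
\]
Next we integrate over $B_\rho$ and use Fubini to exchange the order of integration:
\[
\int_{B_\rho}|\tau_hF|^q\,dx\le |h|^q\int_0^1\int_{B_\rho}|DF(x+the_j)|^q\,dx\,dt .
\]
For each $t$, the change of variables $y=x+the_j$ maps $B_\rho$ into $B_{\rho+|h|}\subset B_R$. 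The inner integral is therefore bounded by $\int_{B_R}|DF|^q\,dy$. This gives the first estimate, with a constant that can even be taken equal to $1$, so in particular of the form $c^q(n)$.

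The second estimate is just the same translation argument without any derivative. The map $x\mapsto x+he_j$ sends $B_\rho$ onto the ball of radius $\rho$ centred at $he_j$, which is contained in $B_R$. Hence
\[
\int_{B_\rho}|F(x+he_j)|^q\,dx=\int_{B_\rho(he_j)}|F(y)|^q\,dy\le\int_{B_R}|F|^q\,dy .
\]
This needs no regularity on $F$ beyond $F\in L^q(B_R)$.

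To pass to general $F$ with $F,DF\in L^q(B_R)$, we take $B_{\rho'}$ with $\rho+|h|<\rho'<R$ (possible because $|h|<(R-\rho)/2$). On $B_{\rho'}$ we approximate $F$ by mollifications $F_\varepsilon=F*\phi_\varepsilon$. For small $\varepsilon$ these are smooth on $\overline{B_{\rho'}}$ and satisfy $F_\varepsilon\to F$ and $DF_\varepsilon=(DF)*\phi_\varepsilon\to DF$ in $L^q(B_{\rho'})$. The smooth case applied with $R$ replaced by $\rho'$ gives the estimate for $F_\varepsilon$. Since translation is continuous in $L^q$, $\tau_hF_\varepsilon\to\tau_hF$ in $L^q(B_\rho)$, and letting $\varepsilon\to0$ yields
\[
\int_{B_\rho}|\tau_hF|^q\,dx\le|h|^q\int_{B_{\rho'}}|DF|^q\,dx\le|h|^q\int_{B_R}|DF|^q\,dx .
\]

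The only delicate point is this approximation step: one must make sure the mollified functions are defined on a ball that still contains every segment $[x,x+he_j]$ with $x\in B_\rho$. The margin $|h|<(R-\rho)/2$ leaves exactly the room needed for this. Everything else is routine.
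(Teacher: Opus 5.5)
Your proof is correct and is essentially the standard argument from Giusti's book (Lemma 8.1 there), which the paper cites instead of proving the lemma. That argument writes $\tau_hF(x)=h\int_0^1\partial_jF(x+the_j)\,dt$ for smooth $F$, applies Jensen/H\"older, integrates using Fubini and a translation, and passes to general $F$ by mollification, while the second estimate is just the change of variables $y=x+he_j$. One small wording point: when you apply the smooth case on $B_{\rho'}$, the literal hypothesis $|h|<(\rho'-\rho)/2$ need not hold, but your smooth-case argument only uses $\rho+|h|\le\rho'$, so the step is valid as written.
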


\noindent Finally, we recall the following fundamental result, whose
proof can be found in \cite[Lemma 8.2]{Giu}. 
\begin{lem}
\noindent \label{lem:RappIncre} Let $F:\mathbb{R}^{n}\rightarrow\mathbb{R}^{k}$,
$F\in L^{q}(B_{R},\mathbb{R}^{k})$ with $1<q<+\infty$. Suppose that
there exist $\rho\in(0,R)$ and a constant $M>0$ such that 
\[
\sum_{j=1}^{n}\int_{B_{\rho}}\left|\tau_{j,h}F(x)\right|^{q}\, \, dx\,\leq\,M^{q}\,\vert h\vert^{q}
\]
for every $h\in\mathbb{R}$ with $\vert h\vert<\frac{R-\rho}{2}$.
Then $F\in W^{1,q}(B_{\rho},\mathbb{R}^{k})$ with the estimate 
\[
\Vert DF\Vert_{L^{q}(B_{\rho})}\leq M
\]

\[
\Delta_{j,h}F\rightarrow\partial_{j}F\,\,\,\,\,\,\,\,\,\,in\,\,L_{loc}^{q}(B_{R},\mathbb{R}^{k})\,\,\,\,\mathit{as}\,\,h\rightarrow0,
\]
for each $j\in\{1,\ldots,n\}$. 
\end{lem}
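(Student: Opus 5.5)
The statement is the classical characterization of $W^{1,q}$ through difference quotients. The plan is to extract a weak limit of the difference quotients using reflexivity of $L^q$, identify it as the distributional derivative via the discrete integration by parts of Proposition \ref{prop}(ii), and obtain the norm bound from weak lower semicontinuity. Strong convergence then follows from an integral Lagrange formula together with continuity of translations.

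\emph{Step 1 (weak compactness).} Fix $j\in\{1,\ldots,n\}$. The hypothesis gives
\[
\|\Delta_{j,h}F\|_{L^q(B_\rho)}\leq M \qquad \text{for all } 0<|h|<\tfrac{R-\rho}{2}.
\]
Since $1<q<\infty$, the space $L^q(B_\rho,\mathbb{R}^k)$ is reflexive. Hence there is a sequence $h_m\to0$ and functions $g_j\in L^q(B_\rho,\mathbb{R}^k)$ with $\Delta_{j,h_m}F\rightharpoonup g_j$ weakly in $L^q(B_\rho)$. Passing to a common subsequence, this holds for all $j$ simultaneously.

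\emph{Step 2 (identification of the weak derivative).} Take $\varphi\in C^\infty_c(B_\rho)$. For $|h|<\mathrm{dist}(\mathrm{supp}\,\varphi,\partial B_\rho)$, Proposition \ref{prop}(ii) gives
\[
\int F\,\Delta_{j,-h}\varphi\,dx=-\int \varphi\,\Delta_{j,h}F\,dx .
\]
On the left, $\Delta_{j,-h}\varphi\to-\partial_j\varphi$ uniformly, with supports in a fixed compact subset of $B_\rho$. Since $F\in L^1_{loc}$, the left side tends to $-\int F\,\partial_j\varphi\,dx$. On the right, weak convergence along $h_m$ gives $-\int\varphi\, g_j\,dx$. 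Therefore $\int F\,\partial_j\varphi\,dx=-\int g_j\varphi\,dx$, so $\partial_jF=g_j$ exists weakly and $F\in W^{1,q}(B_\rho)$.

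\emph{Step 3 (norm bound).} The functional $G=(G_1,\dots,G_n)\mapsto\big(\sum_j\int_{B_\rho}|G_j|^q\,dx\big)^{1/q}$ is convex and continuous on $L^q(B_\rho)^n$, hence weakly lower semicontinuous. Applying it along $h_m$ gives
\[
\Big(\sum_j\int_{B_\rho}|\partial_jF|^q\,dx\Big)^{1/q}\le\liminf_m\Big(\sum_j\int_{B_\rho}|\Delta_{j,h_m}F|^q\,dx\Big)^{1/q}\le M,
\]
which is the estimate $\|DF\|_{L^q(B_\rho)}\le M$ (up to the choice of equivalent norm on matrices).

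\emph{Step 4 (strong convergence).} This is the only slightly delicate point, since Steps 1--3 give only subsequential weak convergence. Once $F\in W^{1,q}(B_\rho)$, fix $B_{\rho'}\Subset B_\rho$. For $|h|<\rho-\rho'$, approximating $F$ by smooth functions shows that for a.e. $x\in B_{\rho'}$
\[
\Delta_{j,h}F(x)=\int_0^1\partial_jF(x+the_j)\,dt .
\]
Minkowski's integral inequality then yields
\[
\|\Delta_{j,h}F-\partial_jF\|_{L^q(B_{\rho'})}\le\sup_{|t|\le|h|}\|\partial_jF(\cdot+te_j)-\partial_jF\|_{L^q(B_{\rho'})}.
\]
The right-hand side tends to $0$ as $h\to0$ by continuity of translations in $L^q$. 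This gives $\Delta_{j,h}F\to\partial_jF$ in $L^q_{loc}$ along the full family $h\to0$, locally in the ball $B_\rho$ where the derivative has been shown to exist. In particular, the weak limit in Step 1 is unique, so no subsequence is actually needed.
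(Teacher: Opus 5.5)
Your proof is correct. It is the standard argument: weak compactness of the difference quotients in the reflexive space $L^q$, identification of the limit by discrete integration by parts, weak lower semicontinuity, and then the integral mean-value formula with continuity of translations for strong convergence. This is essentially the proof of Lemma 8.2 in Giusti's book, which the paper cites rather than reproving.

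Both of your caveats are right:
\begin{itemize}
\item The convergence can only be asserted in $L^q_{\mathrm{loc}}(B_\rho)$, not in $L^q_{\mathrm{loc}}(B_R)$ as literally stated, since $F$ need not be Sobolev outside $B_\rho$.
\item For $q>2$ the bound $\Vert DF\Vert_{L^q(B_\rho)}\le M$ needs the convention $\Vert DF\Vert^q_{L^q}=\sum_j\Vert \partial_jF\Vert^q_{L^q}$; test $F=x_1+x_2$.
\end{itemize}

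There is one sign slip in Step 2. Since $\Delta_{j,-h}\varphi(x)=\frac{\varphi(x-he_j)-\varphi(x)}{-h}\to+\partial_j\varphi(x)$, the left side tends to $+\int F\,\partial_j\varphi\,dx$. Only with this sign do your two limits give your (correct) conclusion $\int F\,\partial_j\varphi\,dx=-\int g_j\varphi\,dx$; as written they would give the opposite sign.
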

For further needs, we record the following
\begin{lem}
\label{lem:sob}
    Let $\Omega\subset\mathbb{R}^n$ be a bounded open set, $p\geq 2$ and $u\in W^{1,p}_{loc}(\Omega).$ Then, for $s>1, $the implication $$H_{p/2}(Du)\in W^{1,s}_{loc}(\Omega)\Longrightarrow |Du|\in L^{\frac{nps}{2(n-s)}}_{loc}(\Omega)$$ holds true, together with the estimate
$$\bigg(\int_{B_R}|Du|^{\frac{nps}{2(n-s)}}\, dx\bigg)^{\frac{n-s}{ns}}\leq C\bigg(\int_{B_R}|DH_{p/2}(Du)|^s\,\, dx\bigg)^{\frac{1}{s}}+\frac{C}{R}\left(\int_{B_R}|H_{p/2}(Du)|^s\, dx\right)^{\frac{1}{s}}+CR^{\frac{n-s}{s}}.$$
for every $B_R\Subset\Omega$.
\end{lem}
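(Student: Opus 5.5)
The statement to prove is Lemma \ref{lem:sob}. The plan is to combine the Sobolev embedding for $H_{p/2}(Du)$ with the pointwise comparison between $|H_{p/2}(Du)|$ and $|Du|$.

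First apply the Sobolev--Poincaré inequality on $B_R$ to $F:=H_{p/2}(Du)\in W^{1,s}(B_R)$. We may assume $1<s<n$; otherwise the exponent $\frac{nps}{2(n-s)}$ is not meaningful. The scale-invariant form of the embedding $W^{1,s}(B_R)\hookrightarrow L^{s^*}(B_R)$, with $s^*=\frac{ns}{n-s}$, reads
\[
\Big(\int_{B_R}|F|^{s^*}dx\Big)^{\frac{1}{s^*}}\le C\Big(\int_{B_R}|DF|^s dx\Big)^{\frac1s}+\frac{C}{R}\Big(\int_{B_R}|F|^s dx\Big)^{\frac1s},
\]
with $C=C(n,s)$. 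To justify it, write $F=(F-F_{B_R})+F_{B_R}$ and use the Sobolev--Poincaré inequality on the first term. For the mean value term, Hölder's inequality gives $|F_{B_R}|\,|B_R|^{1/s^*}\le C R^{\frac{n}{s^*}-\frac ns}\|F\|_{L^s(B_R)}=CR^{-1}\|F\|_{L^s(B_R)}$, since $\frac{n}{s^*}-\frac{n}{s}=-1$.

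Next, convert this into a bound on $|Du|$. Pointwise, $|F|=(|Du|-1)_+^{p/2}$, and hence
\[
|Du|\le 1+|F|^{2/p}.
\]
Setting $\gamma:=\frac{nps}{2(n-s)}=\frac p2 s^*$, this gives $|Du|^\gamma\le 2^{\gamma-1}\big(1+|F|^{s^*}\big)$. Integrating over $B_R$ yields
\[
\int_{B_R}|Du|^\gamma dx\le C\int_{B_R}|F|^{s^*}dx+CR^n.
\]
Raising to the power $\frac{n-s}{ns}=\frac{1}{s^*}$ and using subadditivity of $t\mapsto t^{1/s^*}$, we obtain
\[
\Big(\int_{B_R}|Du|^\gamma dx\Big)^{\frac{1}{s^*}}\le C\|F\|_{L^{s^*}(B_R)}+CR^{\frac{n}{s^*}}=C\|F\|_{L^{s^*}(B_R)}+CR^{\frac{n-s}{s}}.
\]
Combining this with the first step gives exactly the stated estimate.

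The only mildly delicate points are the following.

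\textbf{Membership of $F$ in $W^{1,s}(B_R)$ for balls with $B_R\Subset\Omega$.} The hypothesis only gives $F\in W^{1,s}_{loc}(\Omega)$. Since $\overline{B_R}$ is compact in $\Omega$, $F$ lies in $W^{1,s}$ of a neighbourhood of $\overline{B_R}$, and the inequality on the ball is then legitimate.

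\textbf{Bookkeeping of exponents.} The left-hand side of the statement carries the power $\frac{n-s}{ns}=1/s^*$ rather than $2/(ps^*)$. This is why we keep $|F|^{s^*}$ inside the integral and do not take $p/2$-roots; that choice makes the powers match without any further step.

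I expect no genuine obstacle. The main care lies in using the scaled form of the embedding, so that the $R^{-1}$ and $R^{(n-s)/s}$ factors appear correctly.
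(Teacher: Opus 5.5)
Your proposal is correct and follows essentially the paper's argument: apply the scaled Sobolev embedding to $H_{p/2}(Du)$ on $B_R$, then control $|Du|$ by $1+(|Du|-1)_+$. The paper does this by splitting $B_R$ into $\{|Du|\le 1\}$ and $\{|Du|>1\}$, which amounts to your pointwise bound $|Du|\le 1+|H_{p/2}(Du)|^{2/p}$; your derivation of the $R^{-1}$ term via Sobolev--Poincar\'e and your remark that $s<n$ is needed are details the paper leaves implicit.
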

\begin{proof}
    By the Sobolev embedding theorem we have that $$H_{p/2}(Du)\in L^{s^*}_{loc}(\Omega)$$ with the estimate $$\bigg(\int_{B_R}|H_{p/2}(Du)|^{s^*}\, dx\bigg)^{\frac{1}{s^*}}\leq C\bigg(\int_{B_R}|DH_{p/2}(Du)|^s\, dx\bigg)^{\frac{1}{s}}+\frac{C}{R}\bigg(\int_{B_R}|H_{p/2}(Du)|^s\, dx\bigg)^{\frac{1}{s}}$$  that, by the definition of $H_{p/2}(\xi)$ in \eqref{eq:Hfun}, can be written as follows
    $$\bigg(\int_{B_R}(|Du|-1)_+^{\frac{nps}{2(n-s)}}\, dx\bigg)^{\frac{n-s}{ns}}\leq C\bigg(\int_{B_R}|DH_{p/2}(Du)|^s\, dx\bigg)^{\frac{1}{s}}+\frac{C}{R}\bigg(\int_{B_R}(|Du|-1)^{\frac{ps}{2}}_+\, dx\bigg)^{\frac{1}{s}}.$$  Since \begin{eqnarray*}
     &&\bigg(\int_{B_R}|Du|^{\frac{nps}{2(n-s)}}\, dx\bigg)^{\frac{n-s}{ns}}\cr\cr 
     &=& \bigg(\int_{B_R\cap\{|Du|\le 1\}}|Du|^{\frac{nps}{2(n-s)}}\, dx+\int_{B_R\cap\{|Du|> 1\}}[(|Du|-1)_++1]^{\frac{nps}{2(n-s)}}\, dx\bigg)^{\frac{n-s}{ns}}\cr\cr
     &\le& C R^{\frac{n-s}{s}}+C\bigg(\int_{B_R}(|Du|-1)_+^{\frac{nps}{2(n-s)}}\, dx\bigg)^{\frac{n-s}{ns}},
    \end{eqnarray*}
    the conclusion easily follows.
\end{proof}
\section{A priori estimate}
This section is devoted to the proof of an \textit{a priori} estimate for the solutions to equation \eqref{equazione mia}, which is the first of the main points in our proof. More precisely, we assume a priori that $H_{p/2}(Du)\in W^{1,s}_{loc}(\Omega)$, with $1<s<2$ and we estimate the $L^s$ norm of the derivative of $H_{p/2}(Du)$ with a constant that is independent of the a priori assumption.
In particular let  
\begin{equation}
\label{cond s}
 s_0=\max\bigg\{\frac{2n}{n+\beta},\frac{2np}{np+2p-2},\frac{2nq}{p(n+q-\alpha-2)+2}\bigg\}   
\end{equation}  
and note that, by assumptions \eqref{alebe} and \eqref{q}, we have $s_0<2$ and so we are legitimate to choose $s_0<s<2.$ 
\begin{thm}
\label{teo2}
   Let $u\in W^{1,p}_{loc}(\Omega, |x|^\beta dx)$ be a weak solution of equation \eqref{equazione mia}, under assumptions \eqref{alebe} and \eqref{q}. 
    If $H_{p/2}(Du)\in W^{1,s}_{loc}(\Omega)$ with $s_0<s<2$, then there exists $R_0=R_0(n,p,q,\alpha,\beta,s)$ such that the following estimate 
\begin{align*}
\int_{B_{R/2}}|D(H_{p/2}(Du))|^s\, dx&\leq C\left(\int_{{B_{R}}}|Du|^p|x|^{\beta}\, dx\right)^{\frac{s}{p}}+C\bigg(\int_{B_{R}}|Du|^{\frac{ps}{2}}\, dx+\int_{B_{R}}|u|^{\frac{ps}{2}}\, dx+1\bigg)^{\kappa}\\
&\leq C\left(1+\int_{{B_{R}}}(|Du|^p+|u|^p)|x|^{\beta}\, dx\right)^{\kappa},
\end{align*}
    holds for every $B_R\subset B_{R_0}\Subset\Omega$, for constants $C=C(n, p, q, \alpha, \beta, R)$ and $\kappa=\kappa(p,q)>0.$
\end{thm}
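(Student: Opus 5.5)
We test the weak formulation \eqref{def} with $\varphi=\Delta_{-h}(\eta^2\Delta_h u)$, where $\eta\in C^\infty_0(B_t)$ is a cut-off function with $\eta\equiv1$ on $B_r$, $|D\eta|\le c/(t-r)$, and $R/2\le r<t\le R$. This is legitimate for $|h|$ small, thanks to the a priori assumption and $u\in W^{1,p}_{loc}(\Omega,|x|^\beta dx)$.

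By Proposition \ref{prop}(ii)--(iii), the left-hand side becomes
\[
\int \eta^2|x+he_j|^\beta\langle \Delta_h H_{p-1}(Du),\Delta_h Du\rangle\,dx+\int \Delta_h(|x|^\beta)\eta^2\langle H_{p-1}(Du),\Delta_hDu\rangle\,dx+2\int |x|^\beta\eta\langle\Delta_hH_{p-1}(Du),D\eta\rangle\Delta_hu\,dx .
\]
Lemma \ref{lem:Brasco} bounds the first term from below by $\nu\int\eta^2|x+he_j|^\beta|\Delta_hH_{p/2}(Du)|^2$. The third term is handled through the Lipschitz-type bound $|\tau_hH_{p-1}(Du)|\le c(|H_{p/2}(Du(x+h))|+|H_{p/2}(Du)|)|\tau_hH_{p/2}(Du)|$, which follows from Lemma \ref{eq:BraAmb} with $\theta=p-1$, $\varepsilon=p/2$. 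Young's inequality then absorbs half of it, leaving $\frac{c}{(t-r)^2}\int_{B_t}|x|^\beta|Du|^{p}$ plus terms in $\Delta_hu$ that are controlled by Lemma \ref{lem:Giusti1}.

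For the weight-difference term we use $|\Delta_h|x|^\beta|\le c\,|x|^{\beta-1}$ (for $\beta<1$ this is the Lipschitz-type bound of $|x|^\beta$ away from the origin, and elsewhere it is integrable in the appropriate sense). This produces $\int |x|^{\beta-1}|Du|^{p-1}|\Delta_hDu|$, which we estimate by Hölder against the weighted quantity $|x|^\beta|\Delta_h H_{p/2}|^2$ and the condition $s>2n/(n+\beta)$.

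The key step is to pass from the weighted $L^2$ control of $\Delta_hH_{p/2}(Du)$ to an unweighted $L^s$ bound. Hölder with exponents $2/s$ and $2/(2-s)$ gives
\[
\int_{B_r}|\Delta_hH_{p/2}|^s\le\Big(\int|x|^\beta\eta^2|\Delta_hH_{p/2}|^2\Big)^{s/2}\Big(\int_{B_R}|x|^{-\frac{\beta s}{2-s}}\Big)^{\frac{2-s}{2}} ,
\]
and the last integral is finite precisely when $s<2n/(n+\beta)$... Since the inequality runs the other way, we instead read the first entry of $s_0$ as the condition ensuring that the dual weight exponents appearing in the Hölder splittings are integrable near $0$. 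In either reading, this is where the choice of $s$ relative to $\beta$ enters.

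On the right-hand side we must bound
\[
\int \frac{|u|^{q-2}u}{|x|^\alpha}\Delta_{-h}(\eta^2\Delta_hu)\le \int|x|^{-\alpha}|u|^{q-1}|\Delta_{-h}(\eta^2\Delta_hu)| .
\]
Since $\Delta_{-h}\Delta_h u$ is a second-order quotient, we move one difference back: $\int\Delta_h(|x|^{-\alpha}|u|^{q-2}u)\,\eta^2\Delta_hu$. Its piece $|x|^{-\alpha}|u|^{q-2}|\Delta_hu|^2$ is nonnegative and has the good sign, while the piece $|x|^{-\alpha-1}|u|^{q-1}|\Delta_hu|$ is estimated by a three-factor generalized Hölder inequality. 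The factors are $|x|^{-\alpha-1}$ in some $L^{m}$ with $m(\alpha+1)<n$, then $|u|^{q-1}$ in $L^{p^*/(q-1)}$, and finally $|\Delta_hu|\le|Du|$-type terms placed in $L^{nps/(2(n-s))}$ via Lemma \ref{lem:sob}. The third condition in $s_0$ (together with \eqref{alebe}(ii) and \eqref{q}) is exactly what makes these exponents compatible.

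Lemma \ref{lem:sob} gives
\[
\|Du\|_{L^{nps/2(n-s)}}\lesssim \|DH_{p/2}\|_{L^s}+\text{lower order},
\]
so the right-hand side is bounded by $\varepsilon\|DH_{p/2}(Du)\|^s_{L^s(B_t)}$ times a factor $|B_R|^{\gamma}$ with $\gamma>0$, plus terms in $\int_{B_R}(|u|^{ps/2}+|Du|^{ps/2})$ raised to some $\kappa$. Choosing $R_0$ small makes that factor less than $1/2$; this is the origin of $R_0$. Whenever the powers exceed one, we use Young with the exponents dictated by $\kappa$.

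Letting $h\to0$ by Lemma \ref{lem:RappIncre} yields
\[
Z(r)\le \tfrac12 Z(t)+\frac{A}{(t-r)^{\theta}}+\frac{B}{(t-r)^{\sigma}}+C
\]
for $Z(r)=\int_{B_r}|DH_{p/2}(Du)|^s$, and Lemma \ref{lem:Giusti2} concludes. The second inequality follows by Hölder, since $ps/2<p$ and $|x|^{-\beta\cdot\frac{s}{2-s}}$-type weights are integrable.

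The main obstacle is the source term. Because of the $u$-dependence it cannot be absorbed by the weighted energy alone; it must be closed through the higher integrability of Lemma \ref{lem:sob}, with exponents balanced against $\alpha$ and with smallness coming only from the radius. The delicate bookkeeping is of the Hardy-type singularity $|x|^{-\alpha-1}$ produced by differentiating the weight, and I would first try to treat it with the unweighted Sobolev embedding for $u$ as described above.
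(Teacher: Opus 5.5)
Your outline follows the paper's strategy: second difference quotient, Lemma \ref{lem:Brasco}, H\"older to trade the weighted $L^2$ quantity for an unweighted $L^s$ one, Lemma \ref{lem:sob}, smallness from $R_0$, and Lemma \ref{lem:Giusti2}. It breaks down at the point you noticed but did not resolve.

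The conversion
\[
\int_{B_r}|\Delta_hH_{p/2}(Du)|^s\,dx\le\Big(\int\eta^2|x+he_j|^\beta|\Delta_hH_{p/2}(Du)|^2\,dx\Big)^{\frac s2}\Big(\int_{B_R}|x+he_j|^{-\frac{\beta s}{2-s}}\,dx\Big)^{\frac{2-s}{2}}
\]
requires $s<\frac{2n}{n+\beta}$. The weight-difference term requires the opposite. After Young's inequality it leaves a term like $\int_{B_R}|x|^{\beta-2}|Du|^p\,dx$ (in H\"older form, $|D|x|^\beta|\in L^{\frac{ns}{s(n+1)-2n}}$). With only $|Du|^p\in L^{s^*/2}$ from Lemma \ref{lem:sob}, this can be bounded only if $s(n+\beta)>2n$.

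No $s$ satisfies both conditions, so re-reading the first entry of $s_0$ changes nothing. For the same reason, your closing claim that the second inequality follows because $|x|^{-\beta s/(2-s)}$ is integrable contradicts your own computation.

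Shrinking the radius does not help either. The two H\"older steps produce the factors $R^{\beta+n-\frac{2n}{s}}$ and $R^{\frac{2n}{s}-n-\beta}$, and these cancel. They must, since the weighted principal part is invariant under $u\mapsto u(R\,\cdot)/R$. So the coefficient of $\|DH_{p/2}(Du)\|_{L^s}^2$ produced by this term is independent of $R$, and $R_0$ yields no smallness.

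The paper's written proof contains the same step. In \eqref{dis:s} it asserts that $-\frac{\beta s}{2}+\frac{n(2-s)}{2}>0$ follows from $s>s_0$, but this inequality is equivalent to $s<\frac{2n}{n+\beta}$. Likewise, the estimate of $I_{2,1}$ pairs $\int_{B_R}|x|^{-\beta s/(2-s)}$ with $\|D|x|^\beta\|_{L^{ns/(s(n+1)-2n)}(B_R)}$, and these are never finite at the same time. So the missing idea is not supplied there either.

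One plausible repair, which I have not checked in detail:
\begin{itemize}
\item keep the whole estimate at the level of $\int|x|^\beta|DH_{p/2}(Du)|^2$;
\item control $\int|x|^{\beta-2}|H_{p/2}(Du)|^2$ by a weighted Hardy inequality, with smallness coming from $\beta$ and the structure constants rather than from $R$;
\item pass to $L^s$ with $s<\frac{2n}{n+\beta}$ only at the end.
\end{itemize}

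Several further steps also need repair.
\begin{itemize}
\item The lemma comparing $H_\theta$ and $H_\varepsilon$ requires $\theta<\varepsilon$, so it is used with $\theta=p/2$, $\varepsilon=p-1$. It gives $|\tau_hH_{p-1}(Du)|\le c\,|\tau_hH_{p/2}(Du)|\big(|H_{p/2}(Du(x+he_j))|+|H_{p/2}(Du)|\big)^{\frac{p-2}{p}}$. Your bound with exponent $1$ is false near $\{|Du|=1\}$: take $|\eta|\le1<|\xi|$ and let $|\xi|\to1$.
\item Your expansion of the left-hand side omits the mixed term $2\int\eta\,\Delta_h(|x|^\beta)\langle H_{p-1}(Du),D\eta\rangle\Delta_hu\,dx$, which is the paper's $I_3$.
\item In the weight-difference term, H\"older alone cannot relate $|\Delta_hDu|$ to $|\Delta_hH_{p/2}(Du)|$ across the degenerate set. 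A pointwise reduction such as \eqref{prodl} is needed.
\item In the source term, placing $|u|^{q-1}$ in $L^{p^*/(q-1)}$ requires the unweighted $\|Du\|_{L^p}$, which the weighted energy does not control. Going through Lemma \ref{lem:sob}, the term then grows like $\|DH_{p/2}(Du)\|_{L^s}^{2q/p}$, which cannot be absorbed when $q>p$ (allowed by \eqref{q} for large $p$). It is also inconsistent with the lower-order quantity $\int_{B_R}(|u|^{ps/2}+|Du|^{ps/2})$ you end with. The paper uses $(\frac{ps}{2})^*$ instead.
\end{itemize}

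On the positive side, your remark that $\int\eta^2|x+he_j|^{-\alpha}\Delta_h(|u|^{q-2}u)\,\Delta_hu\,dx\ge0$ enters with the good sign is correct. It makes the paper's two-case estimate of $I_6$ unnecessary.
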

\noindent \begin{proof}[\bfseries{Proof}]

Fix a ball $B_R\Subset\Omega$, that contains the origin, radii $R/2\leq \rho_1\leq r_1<r_2<\rho_2\leq R$ and let $\zeta \in C^\infty_0(B_{r_2})$ be a cut-off function such that $0\leq\zeta\leq 1, \zeta=1$ in $B_{r_1}$ and $|D\zeta|<\frac{C}{r_2-r_1}$. In what follows, without loss of generality, we shall suppose that $R<1.$ Since $u$ is a weak solution of \eqref{equazione mia}, choosing $\varphi=\tau_{j,-h}(\zeta^2\tau_{j,h}u)$ as a test function in \eqref{def}, with $h\in \mathbb{R}\setminus\{0\}, |h|<\frac{\rho_2-r_2}{2}$, integrating by parts by means of $(ii)$ in Proposition \ref{prop}, we get
$$-\int_\Omega\biggl\langle\tau_{j,h}\bigg(|x|^\beta H_{p-1}(Du)\bigg),\zeta^2D(\tau_{j,h}u)+2\zeta D\zeta\tau_{j,h}u\biggr\rangle\textbf{ }\, dx=\int_\Omega \zeta^2 \tau_{j,h}\bigg(\frac{|u|^{q-2}u}{|x|^\alpha}\bigg)\tau_{j,h}u\textbf{ } \, dx.$$ 
Using $(iii)$ in Proposition \ref{prop}, the previous equality can be written as follows
\begin{align*}
&0=\int_\Omega \zeta^2|x+he_j|^\beta\bigl\langle\tau_{j,h}(H_{p-1}(Du)),\tau_{j,h}(Du)\bigr\rangle\textbf{ }\, dx\\
&+\int_\Omega\zeta^2\bigl\langle\tau_{j,h}\big(|x|^\beta\big) H_{p-1}(Du),\tau_{j,h}(Du)\bigr\rangle\textbf{ }\, dx\\
&+2\int_\Omega\zeta\bigl\langle\tau_{j,h}\big(|x|^\beta\big) H_{p-1}(Du),\tau_{j,h}u D\zeta\bigr\rangle\textbf{ }\, dx\\
&+2\int_\Omega \zeta |x+he_j|^\beta\bigl\langle\tau_{j,h}(H_{p-1}(Du)),\tau_{j,h}u D\zeta\bigr\rangle\textbf{ }\, dx\\
&+\int_\Omega \zeta^2 \tau_{j,h}u|u(x+he_j)|^{q-2}u(x+he_j)\tau_{j,h}\bigg(\frac{1}{|x|^\alpha}\bigg)\textbf{ }\, dx\\
&+\int_\Omega \zeta^2 \frac{1}{|x|^\alpha}\tau_{j,h}(|u|^{q-2}u)\tau_{j,h}u\textbf{ }\, dx\\
& =:I_1+I_2+I_3+I_4+I_5+I_6,
\end{align*}
which yields 
\begin{equation}
\label{dis:genl}
    I_1\leq |I_2|+|I_3|+|I_4|+|I_5|+|I_6|.
\end{equation}  
Lemma \ref{lem:Brasco} yields
\begin{align}
\label{i2l}
 I_1&=\int_\Omega\zeta^2 |x+he_j|^\beta\bigl\langle\tau_{j,h}(H_{p-1}(Du)),\tau_{j,h}(Du)\bigr\rangle \, dx\geq \nu\int_{B_{r_2}}\zeta^2|x+he_j|^\beta|\tau_{j,h}(H_{p/2}(Du))|^2\, dx.
\end{align}
As observed in \cite{MP}, by $(iii)$ in Proposition \ref{prop},  in the set $\{y\in B_R: |Du(y)|>1\}$ we may write 
\begin{align*}
\nonumber
    H_{p-1}(Du)\tau_{j,h}Du&=\tau_{j,h}\Big(H_{p-1}(Du)\cdot Du\Big)-Du(x+he_j)\tau_{j,h}\Big(H_{p-1}(Du)\Big)\\ \nonumber
    &=\tau_{j,h}\Big((|Du|-1)^{p-1}_+|Du|\Big)-Du(x+he_j)\tau_{j,h}\Big(H_{p-1}(Du)\Big)\\ \nonumber
    &=\tau_{j,h}\Big((|Du|-1)^{p}_+\Big)+\tau_{j,h}\Big((|Du|-1)^{p-1}_+\Big)-Du(x+he_j)\tau_{j,h}\Big(H_{p-1}(Du)\Big)\\
    &=\tau_{j,h}\Big(|H_p(Du)|\Big)+\tau_{j,h}\Big(|H_{p-1}(Du)|\Big)-Du(x+he_j)\tau_{j,h}\Big(H_{p-1}(Du)\Big),
\end{align*} where we used the definition of $H_\delta(\xi)$ both with $\delta=p$ and $\delta=p-1$ and the linearity of the finite difference operator.\\
From the previous equality, we infer
\begin{eqnarray}\label{prodl}
 | H_{p-1}(Du)||\tau_{j,h}Du| &\le & \big|\tau_{j,h}\big(H_p(Du)\big)\big|+ \big|\tau_{j,h}\big(H_{p-1}(Du)\big)\big|\Big(1+|Du(x+he_j)|\Big)\cr\cr
 &\le & \big|\tau_{j,h}\big(H_p(Du)\big)\big|+ 2\big|\tau_{j,h}\big(H_{p-1}(Du)\big)\big||Du(x+he_j)|,
\end{eqnarray}
in the set $\{x\in B_R: |Du(x)|>1\}$.
By \eqref{prodl}, we obtain
\begin{align}
\label{i1i2i3}
\nonumber
 |I_2|&\leq\int_{B_{r_2}}\zeta^2\big|\tau_{j,h}(|x|^\beta)\big|\big|\tau_{j,h}(|H_{p}(Du)|)\big|\, dx\\\nonumber
&+2\int_{B_{r_2}}\zeta^2\big|\tau_{j,h}(|x|^\beta|)\big|\big(|Du(x+he_j)|\big|\tau_{j,h}(H_{p-1}(Du)) \big|\big)\, dx\\ \nonumber
&\leq C\int_{{B_{r_2}}}\zeta^2\big|\tau_{j,h}(|x|^\beta)\big|\Big|\tau_{j,h}H_{p/2}(Du)\Big|\Big[|H_{p/2}(Du(x+he_j))|+|H_{p/2}(Du)|\Big]\, dx, \\ \nonumber
&+C\int_{B_{r_2}}\!\!\!\zeta^2\big|\tau_{j,h}\big(|x|^\beta\big)\big||Du(x+he_j)|\big|\tau_{j,h}(H_{p/2}(Du))\big|\Big(\!|H_{p/2}(Du(x+he_j))|^{\frac{p-2}{p}}+|H_{p/2}(Du)|^{\frac{p-2}{p}}\!\Big) dx\\
&=:|I_{2,1}|+|I_{2,2}|,
\end{align} where we used Lemma \ref{eq:BraAmb} for the first integral with $\theta=p/2$, $\varepsilon=p$, $\xi=Du(x+he_j)$ and $\eta=Du(x)$ and for the second with $\theta=p/2$ and $\varepsilon=p-1$.
We now proceed estimating the integrals $|I_{2,j}|,\textbf{ }j=1,2$.\\
Since $s>s_0$, by \eqref{cond s} we have in particular that $s>\frac{2n}{n+1}$ and so we are legitimate to use H\"older's inequality with exponents $$s,\frac{ns}{n-s}, \frac{ns}{s(n+1)-2n}.$$
\begin{align*}
|I_{2,1}|&\leq C \bigg(\int_{B_{r_2}}\zeta^2\big|\tau_{j,h}H_{p/2}(Du)\big|^s\, dx\bigg)^{\frac{1}{s}}\bigg(\int_{{B_{r_2}}}\big|\tau_{j,h}(|x|^\beta)\big|^{\frac{ns}{s(n+1)-2n}}\, dx\bigg)^{\frac{s(n+1)-2n}{ns}}\\
&\qquad\quad\cdot\bigg(\int_{B_{r_2}}|H_{p/2}(Du)|^{\frac{ns}{n-s}}\, dx\bigg)^{\frac{n-s}{ns}}\\
&\leq C|h|\bigg(\int_{B_{r_2}}\zeta^2\big|\tau_{j,h}H_{p/2}(Du)\big|^s \, dx\bigg)^{\frac{1}{s}}\bigg(\int_{B_{R}}|D(|x|^\beta)|^{\frac{ns}{s(n+1)-2n}}\bigg)^{\frac{s(n+1)-2n}{ns}}\\
&\qquad\qquad\cdot\bigg(\int_{B_{r_2}}|H_{p/2}(Du)|^{\frac{ns}{n-s}}\, dx\bigg)^{\frac{n-s}{ns}},
\end{align*}
where we also used the properties of $\zeta$ and in the last inequality we used Lemma  \ref{lem:Giusti1}. Using again H\"older's inequality  and Lemma \ref{lem:sob}, we get
\begin{align*}
|I_{2,1}|&\leq C|h|\bigg(\int_{B_{r_2}}\zeta^2|x+he_j|^\beta\big|\tau_{j,h}H_{p/2}(Du)\big|^2\, dx\bigg)^{\frac{1}{2}}\bigg(\int_{B_R}|x|^{\frac{-\beta s}{2-s}}\bigg)^{\frac{2-s}{2}}\\ &\bigg(\int_{B_{R}}|D\big(|x|^\beta\big)|^{\frac{ns}{s(n+1)-2n}}\bigg)^{\frac{s(n+1)-2n}{ns}}\bigg(\int_{B_{\rho_2}}|DH_{p/2}(Du)|^s\,\, dx+\frac{1}{\rho_2^s}\int_{B_{\rho_2}}|H_{p/2}(Du)|^{s}\, dx+\rho_2^{n-s}\bigg)^{\frac{1}{s}}.
\end{align*}
Since, for $\vartheta>-\frac{n}{\ell}$ it holds
\begin{equation}
    \label{x}
    \left(\int_{B_R}|x|^{\vartheta\ell}\, \, dx\right)^{\frac{1}{\ell}}\leq C(\ell, n) R^{\vartheta+\frac{n}{\ell}},
    \end{equation}
considering that $|D(|x|^\beta)|\approx c(\beta)|x|^{\beta-1}$ and since, by \eqref{cond s} $s>s_0>\frac{2n}{n+\beta}$,
we get 
\begin{align}
\label{i11} \nonumber
|I_{2,1}|&\leq C|h| R^{\frac{2-s}{2}\left(\beta-\frac{n(2-s)}{s}\right)}\bigg(\int_{B_{r_2}}\zeta^2|x+he_j|^\beta\big|\tau_{j,h}H_{p/2}(Du)\big|^2\, dx\bigg)^{\frac{1}{2}}\\ \nonumber
&\qquad\qquad \bigg(\int_{B_{\rho_2}}|DH_{p/2}(Du)|^s\,\, dx+\frac{1}{\rho_2^s}\int_{B_{\rho_2}}|H_{p/2}(Du)|^{s}\, dx+\rho_2^{n-s}\bigg)^{\frac{1}{s}}\\ \nonumber
&\leq \varepsilon\int_{B_{r_2}}\zeta^2|x+he_j|^\beta\big|\tau_{j,h}H_{p/2}(Du)\big|^2\, dx\\ \nonumber
&+ C_\varepsilon|h|^2 R^{\left(\beta(2-s)-\frac{n(2-s)^2}{s}\right)} \bigg(\int_{B_{\rho_2}}|DH_{p/2}(Du)|^s\,\, dx\bigg)^{\frac{2}{s}}\\
&+C_\varepsilon \frac{|h|^2}{\rho_2^{2}}\bigg(\int_{B_{\rho_2}}(1+|H_{p/2}(Du)|^{s})\, dx\bigg)^{\frac{2}{s}},
\end{align}
where we also used Young's inequality, the fact that the exponent $$\left(\beta(2-s)-\frac{n(2-s)^2}{s}\right)>0$$ and that $R<1$.

Using the properties of $\zeta$, that $|H_{p/2}(\xi)|\le |\xi|^{p/2}$ and H\"older's inequality with the same exponents as for the estimate of $I_{2,1}$, we get
\begin{align*}
 \nonumber
|I_{2,2}|&\leq C\int_{B_{r_2}}\zeta^2\big|\tau_{j,h}\big(|x|^\beta\big)\big|\big|\tau_{j,h}(H_{p/2}(Du))\big|\Big(|Du(x+he_j)|+|Du(x)|\Big)^{\frac{p}{2}}\, dx\\
&\leq 2\left(\int_{B_{r_2}} \zeta^2|\tau_{j,h}H_{p/2}(Du)|^s \, dx\right)^{\frac{1}{s}} \left(\int_{B_{r_2}}|\tau_{j,h}\big(|x|^\beta\big)|^{\frac{ns}{s(n+1)-2n}}\right)^{\frac{s(n+1)-2n}{ns}}\cr\cr
&\qquad\cdot\left(\int_{B_{r_2}}\Big(|Du(x+he_j)|+|Du(x)|\Big)^{\frac{p}{2}\frac{ns}{n-s}}\, dx\right)^{\frac{n-s}{ns}}.
\end{align*}
By Lemma \ref{lem:Giusti1}, we obtain
\begin{eqnarray*}\label{i12} 
|I_{2,2}|&\leq& C|h|\left(\int_{B_{r_2}} \zeta^2|\tau_{j,h}H_{p/2}(Du)|^s\, dx\right)^{\frac{1}{s}} \left(\int_{B_{R}}|D\big(|x|^\beta\big)|^{\frac{ns}{ns-2n+s}}\right)^{\frac{ns-2n+s}{ns}}\left(\int_{B_{\rho_2}}|Du|^{\frac{p}{2}\frac{ns}{n-s}}\, dx\right)^{\frac{n-s}{ns}}\cr\cr 
&\leq& C|h|\bigg(\int_{B_{r_2}}\zeta^2|x+he_j|^\beta\big|\tau_{j,h}H_{p/2}(Du)\big|^2\, dx\bigg)^{\frac{1}{2}}\bigg(\int_{B_R}|x|^{\frac{-\beta s}{2-s}}\bigg)^{\frac{2-s}{2}}\cr\cr
&&\qquad\qquad\cdot\left(\int_{B_{r_2}}|D\big(|x|^\beta\big)|^{\frac{ns}{s(n+1)-2n}}\right)^{\frac{s(n+1)-2n}{ns}}\left(\int_{B_{\rho_2}}|Du|^{\frac{p}{2}\frac{ns}{n-s}}\, dx\right)^{\frac{n-s}{ns}}\cr\cr 
&\leq& C|h| R^{\frac{2-s}{2}\left(\beta-\frac{n(2-s)}{s}\right)}\bigg(\int_{B_{r_2}}\zeta^2|x+he_j|^\beta\big|\tau_{j,h}H_{p/2}(Du)\big|^2\, dx\bigg)^{\frac{1}{2}}\left(\int_{B_{\rho_2}}|Du|^{\frac{p}{2}\frac{ns}{n-s}}\, dx\right)^{\frac{n-s}{ns}},
\end{eqnarray*}
where as before we used H\"older's inequality and \eqref{x}.  Hence by Lemma \ref{lem:sob} and Young's inequality, we deduce that
\begin{eqnarray}\label{i12} 
|I_{2,2}|&\leq&  C|h| R^{\frac{2-s}{2}\left(\beta-\frac{n(2-s)}{s}\right)}\bigg(\int_{B_{r_2}}\zeta^2|x+he_j|^\beta\big|\tau_{j,h}H_{p/2}(Du)\big|^2\, dx\bigg)^{\frac{1}{2}}\cr\cr
&&\qquad\qquad \left(\int_{B_{\rho_2}}|DH_{p/2}(Du)|^s\,\, dx+\frac{1}{\rho_2^s}\int_{B_{\rho_2}}|H_{p/2}(Du)|^{s}\, dx+\rho_2^{n-s}\right)^{\frac{1}{s}}\cr\cr
&\leq& \varepsilon\int_{B_{r_2}}\zeta^2|x+he_j|^\beta\big|\tau_{j,h}H_{p/2}(Du)\big|^2\, dx \cr\cr
&+& C_\varepsilon|h|^2 R^{\left(\beta(2-s)-\frac{n(2-s)^2}{s}\right)}\bigg(\int_{B_{r_2}}|DH_{p/2}(Du)|^s\,\, dx\bigg)^{\frac{2}{s}}\cr\cr
&+&C_\varepsilon \frac{|h|^2}{\rho_2^{2}}\bigg(\int_{B_{\rho_2}}(1+|H_{p/2}(Du)|^{s})\, dx\bigg)^{\frac{2}{s}},
\end{eqnarray} where we argued as in \eqref{i11} and $\varepsilon>0$ will be chosen later.
\\
Inserting \eqref{i11} and  \eqref{i12} in \eqref{i1i2i3}, we obtain
\begin{align}
\label{i1} \nonumber
|I_2|&\leq 2\varepsilon\int_{B_{r_2}}\zeta^2|x+he_j|^\beta\big|\tau_{j,h}H_{p/2}(Du)\big|^2\, dx \\ \nonumber
&+C_\varepsilon|h|^2 R^{\left(\beta(2-s)-\frac{n(2-s)^2}{s}\right)}\bigg(\int_{B_{\rho_2}}|DH_{p/2}(Du)|^s\,\, dx\bigg)^{\frac{2}{s}}\\
&+C_\varepsilon \frac{|h|^2}{\rho_2^{2}}\bigg(\int_{B_{\rho_2}}(1+|H_{p/2}(Du)|^{s})\, dx\bigg)^{\frac{2}{s}}.
\end{align}
We now proceed with the estimate of $|I_3|$. By the properties of $|D\zeta|$ we have
\begin{align*}
|I_3|
&\leq \frac{C}{r_2-r_1}\int_{B_t}\big|\tau_{j,h}\big(|x|^\beta\big)\big||\tau_{j,h}u|(|Du|-1)^{p-1}_+\, dx.
\end{align*} 
Since $s>s_0$, by \eqref{cond s}, we have in particular that $s>\frac{2np}{np+2p-2}$ and so we can use H\"older's inequality with exponents $$ \frac{ps}{2}, \frac{nps}{2(n-s)(p-1)}, \frac{nps}{2s(p-1)-np(2-s)}$$ to deduce that
\begin{align*}
|I_3|&\leq \frac{C}{r_2-r_1}\bigg(\int_{B_{r_2}}\big|\tau_{j,h}\big(|x|^\beta\big)\big|^{\frac{nps}{2s(p-1)-np(2-s)}}\, dx\bigg)^{\frac{2s(p-1)-np(2-s)}{nps}}\bigg(\int_{B_{r_2}}|\tau_{j,h}u|^{\frac{ps}{2}} \, dx\bigg)^{\frac{2}{ps}}\\\nonumber
&\qquad\qquad\cdot\bigg(\int_{B_{r_2}}(|Du|-1)_+^{\frac{nps}{2(n-s)}}\, dx\bigg)^{\frac{2(n-s)(p-1)}{nps}}\\   \nonumber
&\leq \frac{C|h|^2}{r_2-r_1}\bigg(\int_{B_R}\big|D\big(|x|^\beta\big)\big|^{\frac{nps}{2s(p-1)-np(2-s)}}\, dx\bigg)^{\frac{2s(p-1)-np(2-s)}{nps}}\bigg(\int_{B_R}|Du|^{\frac{ps}{2}}\, dx\bigg)^{\frac{2}{ps}}\\\nonumber 
&\qquad\qquad\cdot\bigg(\int_{B_{\rho_2}}|DH_{p/2}(Du)|^s\,\, dx+\frac{1}{\rho_2^s}\int_{B_{\rho_2}}|H_{p/2}(Du)|^{s}\, dx+\rho_2^{n-s}\bigg)^{\frac{2(p-1)}{ps}}\\   \nonumber
&\leq \frac{CR^{\beta-1+\frac{2s(p-1)-np(2-s)}{ps}}|h|^2}{r_2-r_1}\bigg(\int_{B_R}|Du|^{\frac{ps}{2}}
\, dx\bigg)^{\frac{2}{ps}}\\\nonumber 
&\qquad\qquad\cdot\bigg(\int_{B_{\rho_2}}|DH_{p/2}(Du)|^s\,\, dx+\frac{1}{\rho_2^s}\int_{B_{\rho_2}}|H_{p/2}(Du)|^{s}\, dx+\rho_2^{n-s}\bigg)^{\frac{2(p-1)}{ps}}
\\ \nonumber
&\leq\sigma|h|^2\bigg(\int_{B_{\rho_2}}|DH_{p/2}(Du)|^s\,\, dx+\frac{1}{\rho_2^2}\int_{B_{\rho_2}}|H_{p/2}(Du)|^{s}\, dx+\rho_2^{n-s}\bigg)^{\frac{2}{s}}\\ 
&+\frac{C_\sigma|h|^2}{(r_2-r_1)^p}R^{(\beta-1)p+\frac{2s(p-1)-np(2-s)}{s}}\left(\int_{B_R}|Du|^{{\frac{ps}{2}}}\, dx\right)^{\frac{2}{s}},
\end{align*}
where  we used in turn Lemmas \ref{lem:Giusti1}, \ref{lem:sob}, \eqref{x} and Young's inequality and $\sigma>0$ will be chosen later.\\
Using again that $s>s_0,$ by \eqref{cond s} we have $$s>\frac{2np}{np+\beta p+p-2}$$ which yields $$(\beta-1)p+\frac{2s(p-1)-np(2-s)}{s}>0$$ and so arguing as in \eqref{i11}, previous estimate simplifies to
\begin{eqnarray}
    \label{i3}
    |I_3|&\leq& \sigma|h|^2\bigg(\int_{B_{\rho_2}}|DH_{p/2}(Du)|^s\,\, dx\bigg)^{\frac{2}{s}}+\sigma|h|^2\bigg(\frac{C}{\rho_2^2}\int_{B_{\rho_2}}(1+|H_{p/2}(Du)|^{s})\, dx\bigg)^{\frac{2}{s}}\cr\cr 
&&+\frac{C_\sigma|h|^2}{(r_2-r_1)^p}\left(\int_{B_R}|Du|^{\frac{ps}{2}}\, dx\right)^{\frac{2}{s}}.
\end{eqnarray}
For the estimate of $|I_4|,$ we use that $|D\zeta|\leq\frac{C}{r_2-r_1}$, Lemma \ref{eq:BraAmb} with $\theta=p/2$ and $\varepsilon=p-1$ and Young's inequality as follows
\begin{eqnarray}
\label{i4 inter} 
|I_4|&\leq& \frac{C}{r_2-r_1}\int_{B_{r_2}}\zeta |x+he_j|^\beta|\tau_{j,h}(H_{p-1}(Du))||\tau_{j,h}u|\textbf{ }\, dx\cr\cr 
 &\leq& \frac{C}{r_2-r_1}\int_{B_{r_2}}\!\!\!\zeta|x+he_j|^\beta\tau_{j,h}(H_{p/2}(Du))|\Big(|H_{p/2}(Du(x+he_j))|^{\frac{p-2}{p}}\!+\!|H_{p/2}(Du(x))|^{\frac{p-2}{p}}\Big)|\tau_{j,h}u|\textbf{ }\, dx \cr\cr
 &\leq& \varepsilon\int_{B_{r_2}} \zeta^{2}|x+he_j|^\beta|\tau_{j,h}(H_{p/2}(Du))|^2\, dx\cr\cr
&&+\frac{C_\varepsilon }{(r_2-r_1)^2} \int_{B_{B_{r_2}}}|x+he_j|^\beta\Big(|H_{p/2}(Du(x+he_j))|^{\frac{p-2}{p}}+|H_{p/2}(Du(x))|^{\frac{p-2}{p}}\Big)^2|\tau_{j,h}u|^2\, dx\cr\cr
&\leq &\varepsilon\int_{B_{r_2}} \zeta^{2}|x+he_j|^\beta|\tau_{j,h}(H_{p/2}(Du))|^2\, dx\cr\cr
&&+\frac{C_\varepsilon }{(r_2-r_1)^2} \left(\int_{{B_{r_2}}}|x+he_j|^\beta\Big(|H_{p/2}(Du(x+he_j))|+|H_{p/2}(Du(x))|\Big)^2\,\, dx\right)^{\frac{p-2}{p}}\cr\cr &&\cdot\left(\int_{{B_{r_2}}}|x+he_j|^\beta|\tau_{j,h}u|^p\, dx\right)^{\frac{2}{p}},
\end{eqnarray}
where we used H\"older's inequality.
The last integral in the previous estimate can be bounded using Proposition \ref{prop} and Lemma \ref{lem:Giusti1}, as follows
\begin{align*}
 \nonumber
&\left(\int_{{B_{r_2}}}|x+he_j|^\beta|\tau_{j,h}u|^p\, dx\right)^{\frac{2}{p}}=\left(\int_{{B_{r_2}}}\Big|\tau_{j,h}(|x|^{\frac{\beta}{p}}u)- u(x)\tau_{j,h}(|x|^{\frac{\beta}{p}})\Big|^p\, dx\right)^{\frac{2}{p}}\\ \nonumber
&\leq C\left(\int_{{B_{r_2}}}|\tau_{j,h}(|x|^{\frac{\beta}{p}}u)|^p\, dx\right)^{\frac{2}{p}}+C\left(\int_{{B_{r_2}}} |u\tau_{j,h}(|x|^{\frac{\beta}{p}})|^p\, dx\right)^{\frac{2}{p}}.
\end{align*}
To estimate the first integral we use Lemma \ref{lem:Giusti1} and for the second we use H\"older's inequality  with exponents $$\frac{nps}{2n-ps} \text{ and }\frac{nps}{ns-2n+ps},$$ which is legitimate since $s>s_0$ and hence by \eqref{cond s} we also have that $s>\frac{2n}{n+p}$. This yields
\begin{align}
\label{xx}\nonumber
&\left(\int_{{B_{r_2}}}|x+he_j|^\beta|\tau_{j,h}u|^p\, dx\right)^{\frac{2}{p}}\leq C|h|^2\left(\int_{{B_{\rho_2}}}\Big|D(|x|^{\frac{\beta}{p}}u)\Big|^p\, dx\right)^{\frac{2}{p}}\\ \nonumber &+C\left(\int_{{B_{r_2}}} |u|^{\frac{nps}{2n-ps}}\, dx\right)^{\frac{2n-ps}{ns}\frac{2}{p}}\left(\int_{{B_{r_2}}} |\tau_{j,h}(|x|^{\frac{\beta}{p}})|^{\frac{nps}{ns-2n+ps}}\, dx\right)^{\frac{ns-2n+ps}{ns}\frac{2}{p}}\\ \nonumber
&\leq C|h|^2\left(\int_{{B_{\rho_2}}}|u|^p|D(|x|^{\frac{\beta}{p}})|^p\, dx\right)^{\frac{2}{p}}+ C|h|^2\left(\int_{{B_{\rho_2}}}|Du|^p|x|^{\beta}\, dx\right)^{\frac{2}{p}}\\ \nonumber
&+C(\rho_2)|h|^2\left(\int_{{B_{\rho_2}}} (|Du|^{\frac{ps}{2}}+|u|^{\frac{ps}{2}})\, dx\right)^{\frac{4}{ps}}\left(\int_{{B_{\rho_2}}} |D(|x|^{\frac{\beta}{p}})|^{\frac{nps}{ns-2n+ps}}\, dx\right)^{\frac{ns-2n+ps}{ns}\frac{2}{p}}\\ \nonumber
&\leq C|h|^2\left(\int_{{B_{\rho_2}}} |u|^{\frac{nps}{2n-ps}}\, dx\right)^{\frac{2n-ps}{ns}\frac{2}{p}}\left(\int_{{B_{\rho_2}}} |D(|x|^{\frac{\beta}{p}})|^{\frac{nps}{ns-2n+ps}}\, dx\right)^{\frac{ns-2n+ps}{ns}\frac{2}{p}}\\ \nonumber
&+ C|h|^2\left(\int_{{B_{\rho_2}}}|Du|^p|x|^{\beta}\, dx\right)^{\frac{2}{p}}\\ \nonumber
&+C|h|^2\left(\int_{{B_{\rho_2}}} (|Du|^{\frac{ps}{2}}+|u|^{\frac{ps}{2}})\, dx\right)^{\frac{4}{ps}}\left(\int_{{B_{\rho_2}}} |D(|x|^{\frac{\beta}{p}})|^{\frac{nps}{ns-2n+ps}}\, dx\right)^{\frac{ns-2n+ps}{ns}\frac{2}{p}}\\
&\leq C|h|^2\left(\int_{{B_{\rho_2}}}|Du|^p|x|^{\beta}\, dx\right)^{\frac{2}{p}}+C|h|^2 R^{\frac{2(\beta +n-2)}{p}}\left(\int_{{B_{\rho_2}}} (|Du|^{\frac{ps}{2}}+|u|^{\frac{ps}{2}})\, dx\right)^{\frac{4}{ps}},
\end{align} where we also used Sobolev inequality.\\
Using \eqref{xx}, \eqref{x}, the fact that $\frac{2(\beta +n-2)}{p}>0$ and that $R<1$ in \eqref{i4 inter}, we get 
\begin{align}
\label{i4l} \nonumber
|I_4|&\leq \varepsilon\int_{B_{r_2}} \zeta^{2}|x+he_j|^\beta|\tau_{j,h}(H_{p/2}(Du))|^2\, dx\\ \nonumber
&+\frac{C_\varepsilon }{(r_2-r_1)^2} \left(\int_{{B_{r_2}}}|x+he_j|^\beta\Big(|H_{p/2}(Du(x+he_j))|+|H_{p/2}(Du(x))|\Big)^2\,\, dx\right)^{\frac{p-2}{p}}\\ \nonumber 
&\cdot \bigg[C|h|^2\left(\int_{{B_{\rho_2}}}|Du|^p|x|^{\beta}\, dx\right)^{\frac{2}{p}}+C|h|^2\left(\int_{{B_{\rho_2}}} (|Du|^{\frac{ps}{2}}+|u|^{\frac{ps}{2}})\, dx\right)^{\frac{4}{ps}}\bigg]\\ \nonumber
&\leq \varepsilon\int_{B_{r_2}} \zeta^{2}|x+he_j|^\beta|\tau_{j,h}(H_{p/2}(Du))|^2\, dx\\ 
&+\frac{C_\varepsilon |h|^2}{(r_2-r_1)^2}\left(\int_{{B_{\rho_2}}}|Du|^p|x|^{\beta}\, dx\right)^{\frac{2}{p}}+\frac{C_\varepsilon |h|^2}{(r_2-r_1)^2}\left(1+\int_{{B_{\rho_2}}}(|u|^{\frac{ps}{2}}+|Du|^{\frac{ps}{2}})\, dx\right)^\kappa.
\end{align}
To estimate $|I_5|$, we note that, by our choice of $s$ we have $$2>s>\frac{2nq}{np+2+p(q-1)}$$ that is legitimate by \eqref{cond s}. This yields  $$ \frac{q-1}{{\left(\frac{ps}{2}\right)}^*}+\frac{2(n-s)}{nps}<1$$ that allows us to apply H\"older's inequality with exponents $$\frac{nps}{2(n-s)}, \frac{{\left(\frac{ps}{2}\right)}^*}{q-1},\frac{nps}{nps{-s(p-2)-q(2n-ps)}},$$ thus getting 
\begin{align*}
|I_5|&\leq \int_\Omega \zeta^2 |\tau_{j,h}u||u(x+he_j)|^{q-1}\bigg|\tau_{j,h}\bigg(\frac{1}{|x|^\alpha}\bigg)\bigg|\textbf{ }\, dx\\
&\leq \bigg(\int_{B_{r_2}}|\tau_{j,h}u|^{\frac{nps}{2(n-s)}}\, dx\bigg)^{\frac{2(n-s)}{nps}}\bigg(\int_{B_{\rho_2}}|u|^{{\left(\frac{ps}{2}\right)}^*}\, dx\bigg)^{\frac{q-1}{{\left(\frac{ps}{2}\right)}^*}}\\
&\qquad\cdot\bigg(\int_{B_{r_2}}\bigg|\tau_{j,h}\bigg(\frac{1}{|x|^\alpha}\bigg)\bigg|^{\frac{nps}{nps{-s(p-2)-q(2n-ps)}}} \, dx\bigg)^{\frac{nps{-s(p-2)-q(2n-ps)}}{nps}}\\ 
&\leq |h|^2 \bigg(\int_{B_{\rho_2}}|Du|^{\frac{nps}{2(n-s)}}\, dx\bigg)^{\frac{2(n-s)}{nps}}\bigg(\int_{B_{\rho_2}}|u|^{{\left(\frac{ps}{2}\right)}^*}\, dx\bigg)^{\frac{q-1}{{\left(\frac{ps}{2}\right)}^*}}\\
&\qquad\cdot\bigg(\int_{B_{\rho_2}}\frac{1}{|x|^{(\alpha+1)\frac{nps}{nps-s(p-2)-q(2n-ps)}}}\, dx\bigg)^{\frac{nps-s(p-2)-q(2n-ps)}{nps}},
\end{align*}
where we used Lemma \ref{lem:Giusti1} twice. 
Therefore, we can estimate $|I_5|$ further by the use of Lemma \ref{lem:sob}, Sobolev embedding theorem, Young's inequality and \eqref{x}, thus getting
\begin{align}
 \label{i5ll} \nonumber   
|I_5| &\leq CR^{\frac{nps{-s(p-2)-q(2n-ps)}}{ps}-(\alpha+1)}|h|^2\bigg(\int_{B_{\rho_2}}|DH_{p/2}(Du)|^s\,\, dx+\frac{1}{\rho_2^s}\int_{B_{\rho_2}}|H_{p/2}(Du)|^{s}\, dx+\rho_2^{n}\bigg)^{\frac{2}{p}\frac{1}{s}}\\\nonumber
&\qquad\cdot\bigg(\int_{B_{R}}|Du|^{{\frac{ps}{2}}}\, dx+\frac{1}{R^{{\frac{ps}{2}}}}\int_{B_{R}}|u|^{{\frac{ps}{2}}}\, dx+R^{n-{\frac{ps}{2}}}\bigg)^{\frac{q-1}{{\frac{ps}{2}}}}\\ \nonumber
&\leq\sigma|h|^2\bigg(\int_{B_{\rho_2}}|DH_{p/2}(Du)|^s\,\, dx\bigg)^{\frac{2}{s}}+\sigma |h|^2\bigg(\frac{1}{\rho_2^s}\int_{B_{\rho_2}}(1+|H_{p/2}(Du)|^{s})\, dx\bigg)^{\frac{2}{s}}\\
&+C_\sigma|h|^2\bigg(1+\int_{B_{R}}(|u|^{{\frac{ps}{2}}}+|Du|^{{\frac{ps}{2}}})\, dx\bigg)^{\frac{2(q-1)}{s(p-1)}},
\end{align} where we argued as in \eqref{i11}.
For the estimate of $|I_6|$, we have to study two different cases, $$1<q\leq 2 \text{ and }q> 2.$$ For the case $q> 2$  we use Lemma \ref{Duzaar}  with $\theta=q-1$ and the fact that, since $s>s_0$, by \eqref{cond s} we have $$2>s>\frac{2qn}{np+4+p(q-2)}.$$ This yields  $$ \frac{q-2}{(\frac{ps}{2})^*}+\frac{4(n-s)}{nps}<1$$ that allows us to apply H\"older's inequality with exponents
 $$\frac{nps}{4(n-s)},\, \frac{(\frac{ps}{2})^*}{q-2},\,\frac{nps}{nps-(q-2)(2n-ps)-2(n-s)},$$  to obtain
\begin{align}
\label{i6}
\nonumber
|I_6|&\leq \int_\Omega\zeta^2\frac{1}{|x|^\alpha}|\tau_{j,h}u|^2(|u(x+h)|+|u(x)|)^{q-2}\, dx \\ \nonumber
&\leq \bigg(\int_{B_{r_2}}|\tau_{j,h}u|^{\frac{nps}{2(n-s)}}\, dx\bigg)^{\frac{4(n-s)}{nps}}\bigg(\int_{B_{r_2}}|u(x+h)+u(x)|^{(\frac{ps}{2})^*}\, dx\bigg)^{\frac{q-2}{(\frac{ps}{2})^*}}\\ \nonumber
&\qquad\quad\cdot\bigg(\int_{B_{r_2}}\frac{1}{|x|^{\alpha\frac{nps}{nps-(q-2)(2n-ps)-2(n-s)}}}\, dx\bigg)^{\frac{nps-(q-2)(2n-ps)-2(n-s)}{nps}}\\ \nonumber
&\leq C|h|^2R^{\frac{nps-(q-2)(2n-ps)-2(n-s)}{ps}-\alpha}\bigg(\int_{B_{\rho_2}}|Du|^{\frac{nps}{2(n-s)}}\, dx\bigg)^{\frac{2(n-s)}{nps}}\\\nonumber
&\qquad\quad\cdot\bigg(\int_{B_{R}}|Du|^{{\frac{ps}{2}}}\, dx+\frac{1}{R^{{\frac{ps}{2}}}}\int_{B_{R}}|u|^{{\frac{ps}{2}}}\, dx+R^{n-{\frac{ps}{2}}}\bigg)^{\frac{q-2}{\frac{ps}{2}}}\\ \nonumber
&\leq C|h|^2R^{\frac{nps-(q-2)(2n-ps)-2(n-s)}{ps}-\alpha}\bigg(\int_{B_{\rho_2}}|DH_{p/2}(Du)|^s\,\, dx+\frac{1}{\rho_2^s}\int_{B_{\rho_2}}|H_{p/2}(Du)|^{s}\, dx+\rho_2^{n-s}\bigg)^{\frac{4}{p}\frac{1}{s}}\\\nonumber
&\qquad\quad\cdot\bigg(\int_{B_{R}}|Du|^{{\frac{ps}{2}}}\, dx+\frac{1}{R^{{\frac{ps}{2}}}}\int_{B_{R}}|u|^{{\frac{ps}{2}}}\, dx+R^{n-{\frac{ps}{2}}}\bigg)^{\frac{q-2}{\frac{ps}{2}}}\\ \nonumber
&\leq\sigma|h|^2\bigg(\int_{B_{\rho_2}}|DH_{p/2}(Du)|^s\,\, dx\bigg)^{\frac{2}{s}}+\sigma|h|^2\bigg(\frac{1}{\rho_2^s}\int_{B_{\rho_2}}(1+|H_{p/2}(Du)|^{s})\, dx\bigg)^{\frac{2}{s}}\\
&+C_\sigma|h|^2\bigg(1+\int_{B_{R}}(|u|^{{\frac{ps}{2}}}+|Du|^{{\frac{ps}{2}}})\, dx\bigg)^{\frac{2(q-2)}{s(p-2)}},
\end{align}
where we also used the properties of $\zeta$, Lemmas \ref{lem:Giusti1}, \ref{lem:sob} and Young's inequality.\\
For the case $1< q \leq 2$ we note that $$|u(x+h)-u(x)|(|u(x+h)|^{q-2}u-|u(x)|^{q-2}u(x))\leq |u(x+h)-u(x)|(|u(x+h)|^{q-1}+|u(x)|^{q-1}),$$ using the previous inequality, we get
\begin{align*}
|I_6|&\leq \int_\Omega \zeta^2 \frac{1}{|x|^\alpha}|\tau_{j,h}u|(|u(x+he_j)|+|u(x)|)^{q-1}dx \\
&\leq \bigg(\int_{B_{r_2}}|\tau_{j,h}u|^{\frac{nps}{2(n-s)}}\, dx\bigg)^{\frac{2(n-s)}{nps}}\bigg(\int_{B_{\rho_2}}|u|^{{\left(\frac{ps}{2}\right)}^*}\, dx\bigg)^{\frac{q-1}{{\left(\frac{ps}{2}\right)}^*}}\\
&\qquad\cdot\bigg(\int_{B_{r_2}}\frac{1}{|x|^\alpha}^{\frac{nps}{nps{-s(p-2)-q(2n-ps)}}} \, dx\bigg)^{\frac{nps{-s(p-2)-q(2n-ps)}}{nps}}\\ 
&\leq |h|^2R^{\frac{nps{-s(p-2)-q(2n-ps)}}{ps}-\alpha} \bigg(\int_{B_{\rho_2}}|Du|^{\frac{nps}{2(n-s)}}\, dx\bigg)^{\frac{2(n-s)}{nps}}\bigg(\int_{B_{\rho_2}}|u|^{{\left(\frac{ps}{2}\right)}^*}\, dx\bigg)^{\frac{q-1}{{\left(\frac{ps}{2}\right)}^*}},
\end{align*}where we used H\"older's inequality with the same exponents used in $I_5$ and Lemma \ref{lem:Giusti1} twice. We can estimate $I_6$ further arguing as in \eqref{i5ll}, we get
\begin{align*}
|I_6|&\leq\sigma|h|^2\bigg(\int_{B_{\rho_2}}|DH_{p/2}(Du)|^s\,\, dx\bigg)^{\frac{2}{s}}+\sigma |h|^2\bigg(\frac{1}{\rho_2^s}\int_{B_{\rho_2}}(1+|H_{p/2}(Du)|^{s})\, dx\bigg)^{\frac{2}{s}}\\
&+C_\sigma|h|^2\bigg(1+\int_{B_{R}}(|u|^{{\frac{ps}{2}}}+|Du|^{{\frac{ps}{2}}})\, dx\bigg)^{\frac{2(q-1)}{s(p-1)}}.   
\end{align*}
Next, we insert the estimates  \eqref{i2l}, \eqref{i1}, \eqref{i3}, \eqref{i4l}, \eqref{i5ll} and \eqref{i6} in \eqref{dis:genl}, to get
\begin{align*}
\nonumber
&\nu\int_\Omega\zeta^2|x+he_j|^\beta|\tau_{j,h}(H_{p/2}(Du))|^2\, dx\leq 3\varepsilon\int_\Omega\zeta^2|x+he_j|^\beta|\tau_{j,h}H_{p/2}(Du)|^2\, dx \\\nonumber 
&+|h|^2\bigg(C_\varepsilon R^{\left(\beta(2-s)-\frac{n(2-s)^2}{s}\right)}+3\sigma\bigg)\bigg(\int_{B_{r_2}}|D(H_{p/2}(Du))|^s\, dx\bigg)^{\frac{2}{s}}\\ 
&+\frac{C_\sigma|h|^2}{(r_2-r_1)^p}\bigg(1+\int_{B_{R}}(|u|^{{\frac{ps}{2}}}+|Du|^{{\frac{ps}{2}}})\, dx\bigg)^{\kappa}+\frac{C_\sigma|h|^2}{(r_2-r_1)^p}\bigg(\int_{B_R}|Du|^p|x|^\beta \, dx\bigg)^\frac{2}{p},
\end{align*} where $\kappa=\kappa(p,q)>0$.\\
Choosing $\varepsilon=\frac{\nu}{6}$, we can reabsorb the first integral in the right-hand side of the previous estimate by the left-hand side, using that $\zeta=1$ in $B_{r_1}$ and noting that $\rho_1<r_1$, we get
\begin{align}
\label{est:quasifinale}\nonumber
&\int_{B_{\rho_1}}|x+he_j|^\beta|\tau_{j,h}(H_{p/2}(Du))|^2\, dx\\\nonumber
&\leq |h|^2\bigg(\frac{CR^{\left(\beta(2-s)-\frac{n(2-s)^2}{s}\right)}}{\nu}+\frac{6\sigma}{\nu}\bigg)\bigg(\int_{B_{\rho_2}}|D(H_{p/2}(Du))|^s\, dx\bigg)^{\frac{2}{s}}\\
&+\frac{C_\sigma|h|^2}{(r_2-r_1)^p}\bigg(1+\int_{B_{R}}(|u|^{{\frac{ps}{2}}}+|Du|^{{\frac{ps}{2}}})\, dx\bigg)^{\kappa}+\frac{C_\sigma|h|^2}{(r_2-r_1)^p}\bigg(\int_{B_R}|Du|^p|x|^\beta \, dx\bigg)^\frac{2}{p}.
\end{align} 
Now, H\"older's inequality yields
\begin{eqnarray}\label{dis:s}
   \int_{B_{\rho_1}}|\tau_{j,h}(H_{p/2}(Du))|^s\, dx&\le&\left(\int_{B_{\rho_1}}|x+he_j|^\beta|\tau_{j,h}(H_{p/2}(Du))|^2\, dx\right)^{\frac{s}{2}} \bigg(\int_{B_R}|x+he_j|^{\frac{-\beta s}{2-s}}\bigg)^{\frac{2-s}{2}} \cr\cr
   &\le& C R^{-\frac{\beta s}{2}+\frac{n(2-s)}{2}}\left(\int_{B_{\rho_1}}|x+he_j|^\beta|\tau_{j,h}(H_{p/2}(Du))|^2\, dx\right)^{\frac{s}{2}}\cr\cr
&\le&C \left(\int_{B_{\rho_1}}|x+he_j|^\beta|\tau_{j,h}(H_{p/2}(Du))|^2\, dx\right)^{\frac{s}{2}},
\end{eqnarray}
where we used that $R<1$ and that since $s>s_0$ by \eqref{cond s}, we have that $-\frac{\beta s}{2}+\frac{n(2-s)}{2}>0$.
Using \eqref{dis:s} to estimate the right-hand side of \eqref{est:quasifinale}, we get
\begin{eqnarray*}
&&\int_{B_{\rho_1}}|\tau_{j,h}(H_{p/2}(Du))|^s\, dx\leq |h|^s\bigg( \frac{CR^{\left(\beta(2-s)-\frac{n(2-s)^2}{s}\right)}}{\nu}+\frac{6\sigma}{\nu}\bigg)^{\frac{s}{2}}\int_{B_{\rho_2}}|D(H_{p/2}(Du))|^s\, dx\cr\cr
&&+\frac{C_\sigma|h|^s}{(r_2-r_1)^{\frac{ps}{2}}}\bigg(1+\int_{B_{R}}(|u|^{{\frac{ps}{2}}}+|Du|^{{\frac{ps}{2}}})\, dx\bigg)^{\kappa}+\frac{C_\sigma|h|^s}{(r_2-r_1)^{\frac{ps}{2}}}\bigg(\int_{B_R}|Du|^p|x|^\beta \, dx\bigg)^\frac{s}{p}.
\end{eqnarray*} 
Dividing both sides of inequality by $|h|^s$ and letting $h$ tend to $0$, by the a priori assumption $H_{p/2}(Du)\in W^{1,s}_{loc}(\Omega),$ we get
\begin{eqnarray}
\label{prefine}
&&\int_{B_{\rho_1}}|D(H_{p/2}(Du))|^s\, dx \leq \bigg(\frac{CR^{\left(\beta(2-s)-\frac{n(2-s)^2}{s}\right)}}{\nu}+\frac{6\sigma}{\nu}\bigg)^{\frac{s}{2}}\int_{B_{\rho_2}}|D(H_{p/2}(Du))|^s\, dx\cr\cr
&&+\frac{C_\sigma}{(r_2-r_1)^{\frac{ps}{2}}}\bigg(1\!+\!\int_{B_{R}}(|u|^{{\frac{ps}{2}}}+|Du|^{{\frac{ps}{2}}})\, dx\bigg)^{\kappa}\!\!+\!\frac{C_\sigma}{(r_2-r_1)^{\frac{ps}{2}}}\bigg(\int_{B_R}|Du|^p|x|^\beta \, dx\bigg)^\frac{s}{p}.
\end{eqnarray}
Our next aim is to reabsorb the first integral in the right-hand side by the left-hand side. Note that, by our choice of $s$, we have  $\beta(2-s)-\frac{n(2-s)^2}{s}>0$  and hence we may choose $R_0$ small enough to obtain $$R_0^{\left(\beta(2-s)-\frac{n(2-s)^2}{s}\right)}=\frac{1}{4C},$$ hence, for every $ R<R_0$
\begin{equation}
CR^{\left(\beta(2-s)-\frac{n(2-s)^2}{s}\right)}+\frac{6\sigma}{\nu}\leq \frac{1}{4}+\frac{6\sigma}{\nu}.
\end{equation}
Choosing $\sigma=\frac{\nu}{24}$, we have that \begin{equation}
\bigg(CR^{\left(\beta(2-s)-\frac{n(2-s)^2}{s}\right)}+\frac{6\sigma}{\nu}\bigg)^{\frac{s}{2}}\leq \bigg(\frac{1}{2}\bigg)^{\frac{s}{2}}\leq \frac{1}{2}.
\end{equation}
Moreover, noting that our estimate holds for all $\rho_1<r_1<r_2<\rho_2$ choosing $r_1$ and $r_2$ such that $$\frac{1}{r_2-r_1}\cong\frac{1}{\rho_2-\rho_1},$$ estimate \eqref{prefine} can be written as follows 
\begin{eqnarray*}
&&\int_{B_{\rho_1}}|D(H_{p/2}(Du))|^s\, dx \leq \frac{1}{2}\int_{B_{\rho_2}}|D(H_{p/2}(Du))|^s\, dx\cr\cr
&&+\frac{C}{(\rho_2-\rho_1)^{\frac{ps}{2}}}\bigg(1+\int_{B_{R}}(|u|^{{\frac{ps}{2}}}+|Du|^{{\frac{ps}{2}}})\, dx\bigg)^{\kappa}+\frac{C}{(\rho_2-\rho_1)^{\frac{ps}{2}}}\bigg(\int_{B_R}|Du|^p|x|^\beta \, dx\bigg)^\frac{s}{p}. 
\end{eqnarray*}
At this point, we may use the iteration Lemma \ref{lem:Giusti2} with $Z(\rho)=\int_{B_\rho}|D(H_{p/2}(Du))|^s\, dx$, that yields
\begin{equation}
\label{fine}
\int_{B_{R/2}}|D(H_{p/2}(Du))|^s\, dx \leq C\bigg(1+\int_{B_{R}}(|u|^{{\frac{ps}{2}}}+|Du|^{{\frac{ps}{2}}})\, dx\bigg)^{\kappa}+C\bigg(\int_{B_R}|Du|^p|x|^\beta \, dx\bigg)^\frac{s}{p},
\end{equation} which is the conclusion.

\end{proof}

\section{Proof of Theorem \ref{teo3}}
The aim of this section is to conclude the proof of Theorem \ref{teo3} by using a suitable approximation argument. More precisely, let $u\in W^{1,p}_{loc}(\Omega, |x|^\beta dx)$ be a local solution of \eqref{equazione mia}, fix a ball $B_R\Subset\Omega$ and assume, $R< R_0,$ where $R_0$ has been determined in Theorem \ref{teo2}. For $\varepsilon>0$ and $\eta>0$, let $u_{\varepsilon,\eta}\in W^{1,p}(B_R)$ be the unique solution of the following problem
\begin{equation}
    \label{equ regolare}
    \left\{
    \begin{array}{lll}
    \mathrm{div}\bigg((\varepsilon+|x|^\beta)H_{p-1}(Du_{\varepsilon,\eta})\bigg) = |u_\eta|^{q-2}u_\eta f_\varepsilon(x) & \text{in } B_R \\
    u_{\varepsilon,\eta}  = u_\eta & \text{on } \partial B_R
    \end{array}
    \right.
\end{equation}
where
\begin{itemize}
\item $f_\varepsilon(x):=\frac{1}{|x|^\alpha}\ast \psi_\varepsilon$,
\item $u_\eta(x)=u(x)\ast \varrho_\eta$
\end{itemize}
with $\{\psi_\varepsilon\}_{\varepsilon>0}$ and $\{\varrho_\eta\}_{\eta>0}$ families of standard compactly supported $C^\infty$ mollifiers.\\
\begin{proof}[\textbf{Proof of Theorem $1.1$}]

Observe that the right hand side of the equation in \eqref{equ regolare}  is $C^\infty_{loc}(B_R)$ and hence it satisfies the hypotheses for example of \cite{{AmG},{BraCaSan}}, that yields $H_{p/2}(Du_{\varepsilon,\eta})\in W_{loc}^{1,2}(B_R)$. Hence, choosing $s_0<s<2$, where $s_0$ is defined in \eqref{cond s}, we can use Theorem \ref{teo2} with $u_{\varepsilon,\eta}$ in place of $u$ to deduce that 
\begin{align}
\label{approx} \nonumber
\int_{B_r}|D(H_{p/2}(Du_{\varepsilon,\eta}))|^s\, dx&\leq C\left(\int_{{B_{R}}}|Du_{\varepsilon,\eta}|^p|x|^{\beta}\, dx\right)^{\frac{s}{p}}\\
&+C\bigg(1+\int_{B_{R}}(|u_{\varepsilon,\eta}|^{\frac{ps}{2}}+|Du_{\varepsilon,\eta}|^{\frac{ps}{2}})\, dx\bigg)^{\kappa}\\ 
&\leq C\bigg(1+\int_{B_{R}}(|Du_{\varepsilon,\eta}
|^p+|u_{\varepsilon,\eta}|^p)|x|^\beta\, dx\bigg)^{\kappa},
\end{align}
for every $B_r\subset B_\rho\Subset B_R,$, with a constant $C$ independent of $\varepsilon$. 
Note that \begin{eqnarray}
\label{uvar}
    \int_{B_R}|u_{\varepsilon,\eta}|^{(\frac{ps}{2})^*}\, dx&=&\int_{B_R}|u_{\varepsilon,\eta}-u_\eta+u_\eta|^{(\frac{ps}{2})^*}\, dx\cr\cr
    &\leq&\int_{B_R}\bigg(|u_{\varepsilon,\eta}-u_\eta|^{(\frac{ps}{2})^*}+|u_\eta|^{(\frac{ps}{2})^*}\bigg)\, dx\cr\cr
    &\leq& C\bigg(\int_{B_R}|Du_{\varepsilon,\eta}-Du_\eta|^{\frac{ps}{2}}\, dx\bigg)^{\frac{2n}{2n-ps}}+\int_{B_R}|u_\eta|^{(\frac{ps}{2})^*}\, dx\cr\cr
    &\leq& C\bigg(1+\int_{B_R}|Du_{\varepsilon,\eta}|^{\frac{ps}{2}}\, dx+\int_{B_R}(|Du_\eta|^{\frac{ps}{2}}+|u_\eta|^{\frac{ps}{2}})\, dx\bigg)^{\frac{2n}{2n-ps}},
\end{eqnarray} where we use Sobolev-Poincaré inequality and Sobolev embedding theorem.
Using \eqref{uvar} in \eqref{approx} we obtain
\begin{eqnarray}
\label{veraapprox}\nonumber
\int_{B_r}|D(H_{p/2}(Du_{\varepsilon,\eta}))|^s\, dx&\leq&C\left(\int_{{B_{R}}}|Du_{\varepsilon,\eta}|^p|x|^{\beta}\, dx\right)^{\frac{s}{p}}\cr\cr
&+&C\bigg(1+\int_{B_R}|Du_{\varepsilon,\eta}|^{\frac{ps}{2}}\, dx+\int_{B_R}(|Du_\eta|^{\frac{ps}{2}}+|u_\eta|^{\frac{ps}{2}})\, dx\bigg)^{\kappa}\\ 
&\leq& C\bigg(1+\int_{B_{R}}(|Du_{\varepsilon,\eta}
|^p+|Du_\eta|^p+|u_\eta|^p)|x|^\beta\, dx\bigg)^{\kappa}
\end{eqnarray}
Since $u_{\varepsilon,\eta}$ is a weak solution of \eqref{equ regolare}, choosing $\varphi=u_{\varepsilon,\eta}-u_\eta$ as test function, we obtain \noindent
\begin{align*}
0&=\int_{B_R} \bigl\langle (\varepsilon+|x|^\beta)H_{p-1}(Du_{\varepsilon,\eta}),Du_{\varepsilon,\eta}-Du_\eta\bigr\rangle \, dx-\int_{B_R} |u_\eta|^{q-2}u_\eta f_\varepsilon(x)(u_{\varepsilon,\eta}-u_\eta)\textbf{ } \, dx\\
   &=\int_{B_R}\bigl\langle (\varepsilon+|x|^\beta)H_{p-1}(Du_{\varepsilon,\eta}),Du_{\varepsilon,\eta}\bigr\rangle \, dx
-\int_{B_R} \bigl\langle (\varepsilon+|x|^\beta)H_{p-1}(Du_{\varepsilon,\eta}),Du_\eta\bigr\rangle \, dx\\
&-\int_{B_R} (|u_\eta|^{q-2}u_\eta)f_\varepsilon(x)(u_{\varepsilon,\eta}-u_\eta)\textbf{ } \, dx\\
&=:J_1-J_2-J_3,
\end{align*}
that yields 
\begin{equation}
\label{dis:app}
    J_1\leq |J_2|+|J_3|.
\end{equation}  
By Lemma \ref{lem:Brasco}, we deduce that
\begin{align}
    \label{lhs}
    J_1&\geq \nu\int_{B_R} {|x|^\beta}|H_{p/2}(Du_{\varepsilon,\eta})|^2\, dx+\nu\varepsilon\int_{B_R}|H_{p/2}(Du_{\varepsilon,\eta})|^2\, dx.
\end{align}
In order to estimate $|J_2|$, we use Young's inequality and the definition of $H_\delta(\xi)$ as follows
\begin{align}
    \label{rhs1}\nonumber
    |J_2|&\leq\int_{B_R} {|x|^\beta}|H_{p-1}(Du_{\varepsilon,\eta})||Du_\eta| \, dx+\varepsilon\int_{B_R}|H_{p-1}(Du_{\varepsilon,\eta})||Du_\eta|\, dx\\ \nonumber
    &\leq \sigma\int_{B_R} {|x|^\beta}|H_{p/2}(Du_{\varepsilon,\eta})|^2\, dx+C_\sigma \int_{B_R} |x|^\beta|Du_\eta|^p\, dx+C_\nu\varepsilon \int_{B_R} |Du_\eta|^p\, dx\\ \nonumber &+\frac{\nu\varepsilon}{2}\int_{B_R}|H_{p/2}(Du_{\varepsilon,\eta})|^{\frac{p}{p-1}}\, dx\\ \nonumber
    &\leq \sigma\int_{B_R} {|x|^\beta}|H_{p/2}(Du_{\varepsilon,\eta})|^2\, dx+C_\sigma \int_{B_R} |x|^\beta|Du_\eta|^p\, dx+C_\nu\varepsilon\int_{B_R} |Du_\eta|^p\, dx\\
    &+\frac{\nu\varepsilon}{2}\int_{B_R}|H_{p/2}(Du_{\varepsilon,\eta})|^{2}\, dx.
    \end{align}
Now we estimate $|J_3|$ using H\"older's inequality with exponents $$\bigg(\frac{ps}{2}\bigg)^*, \bigg(\bigg(\frac{ps}{2}\bigg)^*\bigg)',\frac{nps}{nps-q(2n-ps)}$$ and Sobolev-Poincaré theorem, as follows 
\begin{align*}
|J_3|&\leq \bigg(\int_{B_R}|u_{\varepsilon,\eta}-u_\eta|^{(\frac{ps}{2})^*}\, dx\bigg)^{\frac{1}{(\frac{ps}{2})^*}}\bigg(\int_{B_R}f_\varepsilon(x)^{\frac{nps}{nps-q(2n-ps)}}dx\bigg)^\frac{nps-q(2n-ps)}{nps}\bigg(\int_{B_R}|u_\eta|^{(\frac{ps}{2})^*}\, dx\bigg)^{\frac{q-1}{(\frac{ps}{2})^*}}\\\nonumber
&\leq \bigg(\int_{B_R}|Du_{\varepsilon,\eta}-Du_\eta|^{\frac{ps}{2}}\, dx\bigg)^{\frac{2}{ps}}\bigg(\int_{B_R}f_\varepsilon(x)^{\frac{nps}{nps-q(2n-ps)}}dx\bigg)^\frac{nps-q(2n-ps)}{nps}\bigg(\int_{B_R}|u_\eta|^{(\frac{ps}{2})^*}\, dx\bigg)^{\frac{q-1}{(\frac{ps}{2})^*}},
\end{align*}
where we note that the use of H\"older's inequality was legitimate since $s_0<s.$ We can estimate $|J_3|$ further using Sobolev embedding theorem
\begin{align*}
|J_3|&\leq \bigg(\int_{B_R}|Du_{\varepsilon,\eta}-Du_\eta|^{\frac{ps}{2}}\, dx\bigg)^{\frac{2}{ps}}\bigg(\int_{B_R}f_\varepsilon(x)^{\frac{nps}{nps-q(2n-ps)}}dx\bigg)^\frac{nps-q(2n-ps)}{nps}\\\nonumber
&\cdot\bigg(\int_{B_{R}}|Du_\eta|^{\frac{ps}{2}}\, dx+\frac{1}{R^{\frac{ps}{2}}}\int_{B_{R}}|u_\eta|^{\frac{ps}{2}}\, dx+R^{n-\frac{ps}{2}}\bigg)^{\frac{q-1}{\frac{ps}{2}}}\\\nonumber
&\leq\bigg(\int_{|Du_{\varepsilon,\eta}|>1}((|Du_{\varepsilon,\eta}|-1)_+^{\frac{ps}{2}}+1)\, dx\bigg)^{\frac{2}{ps}}\bigg(\int_{B_R}f_\varepsilon(x)^{\frac{nps}{nps-q(2n-ps)}}dx\bigg)^\frac{nps-q(2n-ps)}{nps} \\ \nonumber
&\cdot\bigg(\int_{B_{R}}|Du_\eta|^{\frac{ps}{2}}\, dx+\frac{1}{R^{\frac{ps}{2}}}\int_{B_{R}}|u_\eta|^{\frac{ps}{2}}\, dx+R^{n-\frac{ps}{2}}\bigg)^{\frac{q-1}{\frac{ps}{2}}}\\\nonumber
&+\bigg(\int_{B_R\cap {|Du_{\varepsilon,\eta}|<1}}|Du_{\varepsilon,\eta}|^{\frac{ps}{2}}\, dx\bigg)^{\frac{2}{ps}}\bigg(\int_{B_R}f_\varepsilon(x)^{\frac{nps}{nps-q(2n-ps)}}dx\bigg)^\frac{nps-q(2n-ps)}{nps}\\ \nonumber
&\cdot\bigg(\int_{B_{R}}|Du_\eta|^{\frac{ps}{2}}\, dx+\frac{1}{R^{\frac{ps}{2}}}\int_{B_{R}}|u_\eta|^{\frac{ps}{2}}\, dx+R^{n-\frac{ps}{2}}\bigg)^{\frac{q-1}{\frac{ps}{2}}} \\ \nonumber
&+\bigg(\int_{B_R}|Du_\eta|^{\frac{ps}{2}}\, dx\bigg)^{\frac{2}{ps}}\bigg(\int_{B_R}f_\varepsilon(x)^{\frac{nps}{nps-q(2n-ps)}}dx\bigg)^\frac{nps-q(2n-ps)}{nps}\\ \nonumber
&\cdot\bigg(\int_{B_{R}}|Du_\eta|^{\frac{ps}{2}}\, dx+\frac{1}{R^{\frac{ps}{2}}}\int_{B_{R}}|u_\eta|^{\frac{ps}{2}}\, dx+R^{n-\frac{ps}{2}}\bigg)^{\frac{q-1}{\frac{ps}{2}}} \\ \nonumber
&\leq C\bigg(\int_{B_R}(1+|H_{p/2}(Du_{\varepsilon,\eta})|^s)\, dx\bigg)^{\frac{2}{ps}}\bigg(\int_{B_R}f_\varepsilon(x)^{\frac{nps}{nps-q(2n-ps)}}dx\bigg)^\frac{nps-q(2n-ps)}{nps} \\ \nonumber
&\cdot\bigg(\int_{B_{R}}|Du_\eta|^{\frac{ps}{2}}\, dx+\frac{1}{R^{\frac{ps}{2}}}\int_{B_{R}}|u_\eta|^{\frac{ps}{2}}\, dx+R^{n-\frac{ps}{2}}\bigg)^{\frac{q-1}{\frac{ps}{2}}}\\
&+\bigg(\int_{B_{R}}|Du_\eta|^{\frac{ps}{2}}\, dx+\frac{1}{R^{\frac{ps}{2}}}\int_{B_{R}}|u_\eta|^{\frac{ps}{2}}\, dx+R^{n-\frac{ps}{2}}\bigg)^{\frac{2q}{ps}}\bigg(\int_{B_R}f_\varepsilon(x)^{\frac{nps}{nps-q(2n-ps)}}dx\bigg)^\frac{nps-q(2n-ps)}{nps}.
\end{align*}
Arguing as in \eqref{dis:s} and using Young's inequality, we get
\begin{align}
\label{rhs2}\nonumber
|J_3|&\leq C\bigg(\int_{B_R}|x|^\beta|H_{p/2}(Du_{\varepsilon,\eta})|^2\, dx\bigg)^{\frac{1}{p}}\bigg(\int_{B_R}f_\varepsilon(x)^{\frac{nps}{nps-q(2n-ps)}}dx\bigg)^\frac{nps-q(2n-ps)}{nps} \\ \nonumber
&\cdot\bigg(1+\int_{B_{R}}|Du_\eta|^{\frac{ps}{2}}\, dx+\int_{B_{R}}|u_\eta|^{\frac{ps}{2}}\, dx\bigg)^{\frac{q-1}{\frac{ps}{2}}}\\ \nonumber
&+C\bigg(1+\int_{B_{R}}|Du_\eta|^{\frac{ps}{2}}\, dx+\int_{B_{R}}|u_\eta|^{\frac{ps}{2}}\, dx\bigg)^{\frac{2q}{ps}}\bigg(\int_{B_R}f_\varepsilon(x)^{\frac{nps}{nps-q(2n-ps)}}dx\bigg)^\frac{nps-q(2n-ps)}{nps}\\\nonumber
&\leq \sigma\int_{B_R}|x|^\beta|H_{p/2}(Du_{\varepsilon,\eta})|^2\, dx\\
&+C\bigg(1+\int_{B_{R}}|Du_\eta|^{\frac{ps}{2}}\, dx+\int_{B_{R}}|u_\eta|^{\frac{ps}{2}}\, dx\bigg)^{\frac{2(q-1)}{s(p-1)}}\bigg(\int_{B_R}f_\varepsilon(x)^{\frac{nps}{nps-q(2n-ps)}}dx\bigg)^\frac{nps-q(2n-ps)}{nps},
\end{align} where $C=C(R).$
We can estimate $|J_3|$ further by noting that 
\begin{align}
\label{dis:xalpha}
||f_\varepsilon(x)||_{L^{\frac{nps}{nps-q(2n-ps)}}(B_R)}&=&\bigg|\bigg|\bigg(\frac{1}{|x|^\alpha}\bigg)_\varepsilon\bigg|\bigg|_{L^{\frac{nps}{nps-q(2n-ps)}}(B_R)}\leq C\bigg|\bigg|\frac{1}{|x|^\alpha}\bigg|\bigg|_{L^{\frac{nps}{nps-q(2n-ps)}}(B_R)}\leq C(p,q,n,R), \ \ \   
\end{align} 
where we used the standard properties of the mollifiers.\\
Using \eqref{dis:xalpha} in \eqref{rhs2}, we finally obtain
\begin{eqnarray} \label{rhs21}
   |J_3|&\leq& \sigma\int_{B_R}|x|^\beta|H_{p/2}(Du_{\varepsilon,\eta})|^2\, dx+C_\sigma\bigg(1+\int_{B_{R}}(|Du_\eta|^{\frac{ps}{2}}+|u_\eta|^{\frac{ps}{2}})\, dx\bigg)^{\frac{2(q-1)}{s(p-1)}}. 
\end{eqnarray}

Inserting \eqref{lhs}, \eqref{rhs1} and \eqref{rhs21}  in \eqref{dis:app} we obtain
\begin{align}
\label{m}
\nonumber
   &\nu\int_{B_R}{|x|^\beta}|H_{p/2}(Du_{\varepsilon,\eta})|^2\, dx+\nu\varepsilon\int_{B_R}|H_{p/2}(Du_{\varepsilon,\eta})|^2\, dx\leq 3\sigma\int_{B_R} |x|^\beta|H_{p/2}(Du_{\varepsilon,\eta})|^2\, dx\\ \nonumber
   &+\frac{\nu\varepsilon}{2}\int_{B_R}|H_{p/2}(Du_{\varepsilon,\eta})|^2\, dx+C_\sigma \int_{B_R} |x|^\beta|Du_\eta|^p\, dx+C_\nu\varepsilon\int_{B_R} |Du_\eta|^p\, dx\\ 
   &+C_\sigma\bigg(1+\int_{B_{R}}(|Du_\eta|^{\frac{ps}{2}}+|u_\eta|^{\frac{ps}{2}})\, dx\bigg)^{\frac{2(q-1)}{s(p-1)}}.
\end{align}
Choosing $\sigma=\frac{\nu}{6}$ we can reabsorb the first and the second integral in the right-hand side by the left-hand side, and reabsorbing also the second integral in \eqref{m}, we get
\begin{align*}
 \frac{\nu}{2}\int_{B_R}|x|^\beta |H_{p/2}(Du_{\varepsilon,\eta})|^2\, dx &+\frac{\nu\varepsilon}{2}\int_{B_R}|H_{p/2}(Du_{\varepsilon,\eta})|^2\, dx\leq C \int_{B_R} |x|^\beta|Du_\eta|^p\, dx\\ &+C_\nu\varepsilon\int_{B_R} |Du_\eta|^p\, dx+C\bigg(1+\int_{B_{R}}(|Du_\eta|^{\frac{ps}{2}}+|u_\eta|^{\frac{ps}{2}})\, dx\bigg)^{\frac{2(q-1)}{s(p-1)}},
\end{align*}
that implies
\begin{eqnarray}
\label{cc}\nonumber
 \int_{B_{R}}|x|^\beta|Du_{\varepsilon,\eta}|^p\, dx&\leq& \int_{|Du_{\varepsilon,\eta}|<1}|x|^\beta|Du_{\varepsilon,\eta}|^p\, dx+\int_{|Du_{\varepsilon,\eta}|>1}|x|^\beta\bigg((|Du_{\varepsilon,\eta}|-1)^p_++1\bigg)\, dx\\\nonumber
 &\leq& C\int_{B_R}|x|^\beta dx+\int_{B_R}|x|^\beta(1+|H_{p/2}(Du_\eta)|^2)dx\\
 &\leq& C(R,||Du_\eta||_{L^p(B_R,|x|^\beta dx)})
\end{eqnarray}and
\begin{eqnarray}
\label{c}
\int_{B_{R}}|Du_{\varepsilon,\eta}|^{\frac{ps}{2}}\, dx&\le&\left(\int_{B_{R}}|x|^\beta|Du_{\varepsilon,\eta}|^p\, dx\right)^{\frac{s}{2}} \bigg(\int_{B_R}|x|^{\frac{-\beta s}{2-s}}\bigg)^{\frac{2-s}{2}} \cr\cr
 &\le& C(R,||Du_\eta||_{L^p(B_R,|x|^\beta dx)})
\end{eqnarray}
with constants both independent of $\varepsilon$.
Using \eqref{cc} and \eqref{c} in the right-hand side of \eqref{veraapprox}, we get
\begin{align}
\label{approx3}\nonumber
\int_{B_r}|DH_{p/2}(Du_{\varepsilon,\eta})|^s\, dx &\leq C \int_{B_R} |x|^\beta|Du_\eta|^p\, dx+C_\nu\varepsilon\int_{B_R} |Du_\eta|^p\, dx\\&+C\bigg(1+\int_{B_{R}}(|Du_\eta|^{\frac{ps}{2}}+|u_\eta|^{\frac{ps}{2}})\, dx\bigg)^{\frac{2(q-1)}{s(p-1)}}.
\end{align}
Notice that, since we aim to take the limit as $\varepsilon \to 0$, we can suppose $\varepsilon \le 1$, so that the right-hand side is bounded independently of $\varepsilon$.\\
By \eqref{c} and \eqref{approx3}, as $\varepsilon\to 0$, we have 
\begin{align}
\label{converge} 
u_{\varepsilon,\eta}&\rightharpoonup v_\eta \text{ in }W^{1,{\frac{ps}{2}}}\\ 
H_{p/2}(Du_{\varepsilon,\eta}) &\rightharpoonup w_\eta \text{ in } W^{1,s}_{loc}(B_R)
\end{align} 
which yields $$H_{p/2}(Du_{\varepsilon,\eta}) \longrightarrow w_\eta \text{ in } L^{\varsigma}_{loc}(B_R) \ \ \forall \varsigma<s^*,$$
and therefore also a.e. up to a subsequence. By the continuity of the function $H_{p/2}(\xi)$ and the uniqueness of the weak limit, it holds $$w_\eta=H_{p/2}(Dv_\eta).$$
Now note that passing to the limit as $\varepsilon\to 0$ in \eqref{approx3} and applying Fatou's Lemma, we obtain
\begin{equation}
    \label{approx4}
 \int_{B_r}|DH_{p/2}(Dv_{\eta})|^s\, dx \leq C \int_{B_R} |x|^\beta|Du_\eta|^p\, dx+C\bigg(1+\int_{B_{R}}(|Du_\eta|^{\frac{ps}{2}}+|u_\eta|^{\frac{ps}{2}})\, dx\bigg)^{\frac{2(q-1)}{s(p-1)}}.   
\end{equation}
Our next aim is to prove that the function $v_\eta\in W^{1,p}(B_R, |x|^\beta dx)$ is a solution of \eqref{equazione mia}, i.e. in weak formulation, that the following identity
\begin{equation}
    \label{verifica2}
    \int_{B_R}\bigg(|x|^\beta\langle H_{p-1}(Dv_\eta),D\varphi\rangle \ \, dx=\int_{B_R}\frac{1}{|x|^\alpha}|u_\eta|^{q-2}u_\eta \ \varphi \ \, dx, 
\end{equation}
holds for all $\varphi\in W^{1,p}_0(B_R,|x|^\beta dx).$
In order to prove \eqref{verifica2}, we observe that  
\begin{align}\label{verifica3}
\nonumber
&\int_{B_R}|x|^\beta\langle  H_{p-1}(Dv_\eta), D\varphi\rangle \, dx=\int_{B_R}|x|^\beta\langle H_{p-1}(Dv_\eta),D\varphi\rangle \ \, dx \\ \nonumber
&+\int_{B_R}(\varepsilon+|x|^\beta)\langle H_{p-1}(Du_{\varepsilon,\eta}), D\varphi \rangle \ \, dx-\int_{B_R}(\varepsilon+|x|^\beta)\langle H_{p-1}(Du_{\varepsilon,\eta}), D\varphi\rangle \ \, dx\\ \nonumber
&=\int_{B_R}|x|^\beta\langle H_{p-1}(Dv_\eta),D\varphi\rangle \ \, dx+\int_{B_R}|u_\eta|^{q-2}u_\eta f_\varepsilon(x)\varphi \ \, dx\\\nonumber
&-\int_{B_R}(\varepsilon+|x|^\beta)\langle H_{p-1}(Du_{\varepsilon,\eta}),D\varphi\rangle \ \, dx\\\nonumber
&=\int_{B_R}|x|^\beta\langle H_{p-1}(Dv_\eta)-H_{p-1}(Du_{\varepsilon,\eta}), D\varphi\rangle \, dx-\varepsilon\int_{B_R}\langle H_{p-1}(Du_{\varepsilon,\eta}),D\varphi\rangle \ \, dx\\ \nonumber
&+\int_{B_R}\bigg(|u_\eta|^{q-2}u_\eta f_\varepsilon(x)-\frac{1}{|x|^\alpha}|u_\eta|^{q-2}u_\eta\bigg)\varphi \ \, dx+\int_{B_R}\frac{1}{|x|^\alpha}|u_\eta|^{q-2}u_\eta\varphi \, dx\\ \nonumber
&= \int_{B_R}|x|^\beta\langle H_{p-1}(Dv_\eta)-H_{p-1}(Du_{\varepsilon,\eta}), D\varphi\rangle \, dx+\int_{B_R}\bigg(\frac{1}{|x|^\alpha}-\bigg(\frac{1}{|x|^\alpha}\bigg)_\varepsilon\bigg)|u_\eta|^{q-2}u_\eta\varphi\\ \nonumber
&-\varepsilon\int_{B_R}\langle H_{p-1}(Du_{\varepsilon,\eta}),D\varphi\rangle \ \, dx+\int_{B_R}\frac{1}{|x|^\alpha}|u_\eta|^{q-2}u_\eta \ \varphi \, dx\\
&=:J_\varepsilon+JJ_\varepsilon-\varepsilon\int_{B_R}\langle H_{p-1}(Du_{\varepsilon,\eta}),D\varphi\rangle \ \, dx+\int_{B_R}\frac{1}{|x|^\alpha}|u_\eta|^{q-2}u_\eta \ \varphi \, dx,
\end{align}
where, in the second identity we used that $u_{\varepsilon,\eta}$ solves problem  \eqref{equ regolare}. Note that using Young's and H\"older's inequalities, we get
\begin{align}
\label{dis:H}
\nonumber
\varepsilon\int_{B_R}\langle H_{p-1}(Du_{\varepsilon,\eta}),D\varphi\rangle \ \, dx&\leq\frac{\varepsilon}{2}\int_{B_R} |x|^\beta|D\varphi|^p\, dx+\frac{\varepsilon}{2}\int_{B_R}|x|^{\frac{-\beta}{p-1}}|H_{p-1}(Du_{\varepsilon,\eta})|^{\frac{p}{p-1}}\, dx\\ \nonumber
&\leq \frac{\varepsilon}{2}||D\varphi||_{L^p(B_R,|x|^\beta dx)}+\frac{\varepsilon}{2}\int_{supp \ D\varphi}|x|^{\frac{-\beta}{p-1}}|H_{p-1}(Du_{\varepsilon,\eta})|^{\frac{p}{p-1}}\, dx\\\nonumber
&= \frac{\varepsilon}{2}||D\varphi||_{L^p(B_R,|x|^\beta dx)}+\frac{\varepsilon}{2}
\int_{supp \ D\varphi}|x|^{\frac{-\beta}{p-1}}|H_{p/2}(Du_{\varepsilon,\eta})|^2\, dx\\ \nonumber
&\leq \frac{\varepsilon}{2}||D\varphi||_{L^p(B_R,|x|^\beta dx)}\\&\nonumber+\frac{\varepsilon}{2}\bigg(\int_{supp \ D\varphi}|H_{p/2}(Du_{\varepsilon,\eta})|^{2\tilde{s}}\, dx\bigg)^{\frac{1}{\tilde{s}}}\bigg(\int_{B_R}|x|^{\frac{-\beta}{p-1}\frac{\tilde{s}}{\tilde{s}-1}}dx\bigg)^{\frac{\tilde{s}-1}{\tilde{s}}}\\
&\leq \varepsilon C(R,||D\varphi||_{L^p(B_R,|x|^\beta dx)},\eta)
\end{align} where we chose $\frac{n(p-1)}{n(p-1)-\beta}<\tilde{s}<s^*
,$ we used \eqref{approx3} and $C$ is independent of $\varepsilon.$
We notice that the right hand side \eqref{dis:H} goes to $0$ as $\varepsilon \rightarrow 0.$ 
Our aim is to prove that $J_\varepsilon$ and $JJ_\varepsilon$ tend to $0$ as well, as $\varepsilon$ goes to $0.$ Indeed, we have
\begin{align}
  \label{j1} \nonumber
  |J_\varepsilon|&\leq ||D\varphi||_{L^p(B_R,|x|^\beta dx)}\bigg(\int_{supp \ D\varphi}|x|^\beta|H_{p-1}(Dv_\eta)-H_{p-1}(Du_{\varepsilon,\eta})|^\frac{p}{p-1}\, dx\bigg)^{\frac{p-1}{p}}\\ \nonumber
  &\leq C\bigg(\int_{supp \ D\varphi}|x|^\beta|H_{p/2}(Dv_\eta)-H_{p/2}(Du_{\varepsilon,\eta})|^{\frac{p}{p-1}}\bigg((|Dv_\eta|-1)_++(|Du_{\varepsilon,\eta}|-1)_+\bigg)^{\frac{p}{p-1}\frac{p-2}{2}}\, dx\bigg)^{\frac{p-1}{p}}\\ \nonumber
&\leq C\bigg(\int_{supp \ D\varphi}|H_{p/2}(Dv_\eta)-H_{p/2}(Du_{\varepsilon,\eta})|^2\, dx\bigg)^{\frac{p}{2(p-1)}}\\ \nonumber
&\cdot\bigg(\int_{supp \ D\varphi}|x|^{\beta}\bigg((|Dv_\eta|-1)_++(|Du_{\varepsilon,\eta}|-1)_+\bigg)^{\frac{2(p-1)}{p-2}}\, dx\bigg)^{\frac{p-2}{2p}}\\
&\leq C\bigg(\int_{supp \ D\varphi}|H_{p/2}(Dv_\eta)-H_{p/2}(Du_{\varepsilon,\eta})|^2\, dx\bigg)^{\frac{p}{2(p-1)}}\bigg(\int_{B_R}|x|^\beta(|Dv_\eta|^{p}+|Du_{\varepsilon,\eta}|^{p})\, dx\bigg)^{\frac{p-2}{2p}},
\end{align}where we used Lemma \ref{eq:BraAmb} with $\epsilon=p-1$ and $\theta=p/2$ and H\"older's inequality. Note that since $s>s_0$ in particular we may choose $s>\frac{2n}{n+2}$ so that $2<s^*,$ and so $H_{p/2}(Du_{\varepsilon,\eta}) \to H_{p/2}(Dv_\eta)$ strongly in $L^2_{loc}(B_R),$ and since $\int_{B_R}|Du_{\varepsilon,\eta}|^{\frac{ps}{2}}\, dx$ is bounded by \eqref{c}, then we conclude that $J_\varepsilon \to 0$.\\
For the estimate of $JJ_\varepsilon$ we have to study again the case $1<q\leq 2$ and $q>2$ apart. For $q>2$, since $s>s_0$ by \eqref{cond s} we have $$s>\frac{2nq}{p(n+q-\alpha)}$$ and so we can estimate $JJ_\varepsilon$ by using H\"older's with exponents $$ \bigg(\frac{ps}{2}\bigg)^*, \frac{(\frac{ps}{2})^*}{q-2},\frac{nps}{nps-q(2n-ps)}$$and Sobolev inequalities, getting
\begin{align}
\label{jj} \nonumber
|JJ_\varepsilon|&\leq ||\varphi||_{L^{(\frac{ps}{2})^*}(B_R,|x|^\beta dx)} \bigg(\int_{B_R}\bigg|\frac{1}{|x|^\alpha}-\bigg(\frac{1}{|x|^\alpha}\bigg)_\varepsilon\bigg|^{\frac{nps}{nps-q(2n-ps)}}\, dx\bigg)^{\frac{nps-q(2n-ps)}{nps}}\\ 
&\cdot \bigg(\int_{B_{R}}(1+|Du_\eta|^{\frac{ps}{2}}+|u_\eta|^{\frac{ps}{2}})\, dx\bigg)^{\kappa}.
\end{align}
Since $\bigg(\frac{1}{|x|^\alpha}\bigg)_\varepsilon \to \frac{1}{|x|^\alpha}$ strongly in $L^{\frac{nps}{nps-q(2n-ps)}}(B_R),$ we can conclude that $JJ_\varepsilon \to 0$ as $\varepsilon \to 0.$ \\

The case $1<q\leq 2$ follows using $$|u_\eta|^{q-1} \leq 1+|u_\eta|,$$ H\"older's with exponents $$\bigg(\frac{ps}{2}\bigg)^*, \frac{(\frac{ps}{2})^*}{(\frac{ps}{2})^* - 2},\bigg(\frac{ps}{2}\bigg)^*$$
and Sobolev inequalities, obtaining
\begin{align*}
\int_{B_R} \left| \frac{1}{|x|^\alpha} - \left(\frac{1}{|x|^\alpha}\right)_\varepsilon \right| |u_\eta|^{q-1} |\varphi| \, dx 
&\leq C \|\varphi\|_{L^{(\frac{ps}{2})^*}(B_R,|x|^\beta dx)} \bigg(\int_{B_R} \left| \frac{1}{|x|^\alpha} - \left(\frac{1}{|x|^\alpha}\right)_\varepsilon \right|^{\frac{(\frac{ps}{2})^*}{(\frac{ps}{2})^* - 2}} dx\bigg)^{\frac{{(\frac{ps}{2})^* - 2}}{(\frac{ps}{2})^*}}\\ &\cdot\left( \int_{B_R} \left(1+|Du_\eta|^{\frac{ps}{2}} + |u_\eta|^{\frac{ps}{2}}\right) \, dx \right)^{\kappa}.
\end{align*} We note again that since $\bigg(\frac{1}{|x|^\alpha}\bigg)_\varepsilon \to \frac{1}{|x|^\alpha}$ strongly in $L^{\frac{nps}{nps-q(2n-ps)}}(B_R),$ we can conclude that $JJ_\varepsilon \to 0$ as $\varepsilon \to 0.$ 
Our next aim is to prove that $H_{p/2}(Du_\eta)=H_{p/2}(Dv_\eta).$ Since $u_\eta$ and $v_\eta$ solve \eqref{equ regolare} we have
\begin{equation*}
\int_{B_R}|x|^\beta\langle H_{p-1}(Du_\eta)-H_{p-1}(Dv_\eta), D\varphi\rangle \, dx=0
\end{equation*}
and, choosing $\varphi=u_\eta-v_\eta$ as a test function, we obtain
\begin{align}
    \label{fineapprox}
    0\leq \int_{B_R}|x|^\beta|H_{p/2}(Du_\eta)-H_{p/2}(Dv_\eta)|^2\, dx &\leq \int_{B_R}|x|^\beta\langle H_{p-1}(Du_\eta)-H_{p-1}(Dv_\eta), Du_\eta-Dv_\eta\rangle \, dx=0,
\end{align}
where we used the degenerate ellipticity of $H_{p/2}(\xi)$ given by Lemma \ref{lem:Brasco}.\\ Hence \eqref{fineapprox} proves that $H_{p/2}(Du_\eta)=H_{p/2}(Dv_\eta)$ and consequently, \eqref{approx4} holds with $u_\eta$ in place of $v_\eta$, i.e.
\begin{equation}
    \label{approx5}
 \int_{B_r}|DH_{p/2}(Du_{\eta})|^s\, dx \leq C \int_{B_R} |x|^\beta|Du_\eta|^p\, dx+C\bigg(1+\int_{B_{R}}(|Du_\eta|^{\frac{ps}{2}}+|u_\eta|^{\frac{ps}{2}})\, dx\bigg)^{\frac{2(q-1)}{s(p-1)}}.   
\end{equation}
Since $u_\eta\to u$ in $W^{1,p}(B_R, |x|^\beta dx)$, passing to the limit as $\eta\to 0$ in \eqref{approx5}, we obtain the desired estimate.

\end{proof}

\smallskip
\par\noindent
{\bf Funding}. This research was partly funded by:\\
\begin{itemize}
    \item GNAMPA of the Italian INdAM-National Institute of High Mathematics (grant number not available);
    \item GNAMPA Project 2026, grant number  E53C25002010001, ``Esistenza e regolarità per soluzioni di equazioni ellittiche e paraboliche anisotrope''
\end{itemize}

\bigskip
\par\noindent
{\bf Conflict of Interest}. The authors declare that they have no conflict of interest.

\end{document}